\newtheorem{theorem}{Theorem}[section]
\newtheorem{proposition}[theorem]{Proposition}
\newtheorem{lemma}[theorem]{Lemma}
\newtheorem{corollary}[theorem]{Corollary}
\newtheorem{remark}[theorem]{Remark}
\newtheorem{definition}[theorem]{Definition}
\newcommand{\longthmtitle}[1]{\mbox{}\textit{(#1).}}
\newcommand{\oprocendsymbol}{\hbox{$\bullet$}}
\newcommand{\oprocend}{\relax\ifmmode\else\unskip\hfill\fi\oprocendsymbol}
\newcommand{\real}{\ensuremath{\mathbb{R}}}
\newcommand{\realnegative}{\ensuremath{\mathbb{R}_{<0}}}
\newcommand{\realnonnegative}{\ensuremath{\mathbb{R}_{\ge 0}}}
\newcommand{\integers}{{\mathbb{Z}}}
\newcommand{\integerspositive}{{\mathbb{N}}}
\newcommand{\integersnonnegative}{{\mathbb{N}}_0}
\newcommand{\Ac}{\mathcal{A}}
\newcommand{\Cc}{\mathcal{C}}
\newcommand{\Dc}{\mathcal{D}}
\newcommand{\Fc}{\mathcal{F}}
\newcommand{\Lc}{\mathcal{L}}
\newcommand{\Tc}{\mathcal{T}}
\newcommand{\Vc}{\mathcal{V}}
\newcommand{\xv}{x}
\newcommand{\vv}{v}
\newcommand{\dxv}{\dot{x}}
\newcommand{\dvv}{\dot{v}}
\newcommand{\uv}{u}
\newcommand{\tauocc}{\tau^{\text{occ}}}
\newcommand{\nunom}{\nu^{\text{nom}}}
\newcommand{\Dcnom}{\Dc^{\text{nom}}}
\newcommand{\Tnom}{T^{\text{nom}}}
\newcommand{\Tfol}{T^{\text{fol}}}
\newcommand{\taue}{\tau^e}
\newcommand{\taul}{\tau^l}
\newcommand{\Ta}{T^a}
\newcommand{\Tx}{T^{\text{exit}}}
\newcommand{\Te}{T^e}
\newcommand{\Tl}{T^l}
\newcommand{\ulinevv}{\underline{v}}
\newcommand{\Tiat}{T^{iat}}
\newcommand{\tstop}{t^{\text{stop}}}
\newcommand{\guc}{g_{uc}}
\newcommand{\gsf}{g_{sf}}
\newcommand{\gus}{g_{us}}
\newcommand{\until}[1]{\{1,\dots,{#1}\}}
\newcommand{\ntil}[1]{\{2,\dots,{#1}\}}
\newcommand{\bartauocc}{\bar{\tau}^{\text{occ}}}
\newcommand{\localvehcontrol}{\texttt{\small local vehicular}\xspace}%
\begin{document}

\title{Distributed control of vehicle strings \\
  under finite-time and safety specifications\thanks{A preliminary
    version of this work appeared as~\cite{PT-JC:15-necsys} at the 5th
    IFAC Workshop on Distributed Estimation and Control in Networked
    Systems.}}

\author{Pavankumar Tallapragada \qquad Jorge Cort{\'e}s
  \thanks{Pavankumar Tallapragada is with the Department of Electrical
    Engineering, Indian Institute of Science, Bengaluru {\tt\small
      pavant@ee.iisc.ernet.in} and Jorge Cort{\'e}s is with the
    Department of Mechanical and Aerospace Engineering, University of
    California, San Diego {\tt\small cortes@ucsd.edu}}%
}

\maketitle

\begin{abstract}
  This paper studies an optimal control problem for a string of
  vehicles with safety requirements and finite-time specifications on
  the approach time to a target region.  Our problem formulation is
  motivated by scenarios involving autonomous vehicles circulating on
  arterial roads with intelligent management at traffic intersections.
  We propose a provably correct distributed control algorithm that
  ensures that the vehicles satisfy the finite-time specifications
  under speed limits, acceleration saturation, and safety
  requirements. The safety specifications are such that collisions can
  be avoided even in cases of communication failure.  We also discuss
  how the proposed distributed algorithm can be integrated with an
  intelligent intersection manager to provide information about the
  feasible approach times of the vehicle string and a guaranteed bound
  of its time of occupancy of the intersection. Our simulation study
  illustrates the algorithm and its properties regarding approach
  time, occupancy time, and fuel and time cost.
\end{abstract}

\begin{IEEEkeywords}
  vehicle strings, distributed control, intelligent transportation,
  networked vehicles, state-based intersection management
\end{IEEEkeywords}

\section{Introduction}

In this paper we are motivated by the vision for urban traffic with
coordinated computer-controlled vehicles and networked intersection
managers.  Emerging technologies in autonomy and communication offer
the opportunity to radically redesign our transportation systems,
reducing road accidents and traffic collisions and positively
impacting safety, traveling ease, travel time, and energy consumption.
For example, cruise control and coordination of vehicles (e.g.,
platooning) could ensure smoother (with reduced stop-and-go), safer
and fuel-efficient traffic flow.  This vision involves, among many
other things, scheduling of vehicles' usage of an intersection and the
vehicles optimally meeting those schedules under safety
constraints. Distributed algorithmic solutions are necessary in order
to produce real-time implementations under the computationally-heavy
tasks involved. To this end, here we explore a generalized problem of
distributed control of vehicle strings under specifications of
reaching a target in a fixed finite time while respecting safety
specifications.

\subsubsection*{Literature review}

The control and coordination of multi-vehicle systems in the context
of transportation has a long history starting with the platooning
problem formalized in~\cite{WSL-MA:66}. This classical and active area
of research is so vast that a fair and complete overview is beyond the
scope of this paper. The recent survey
papers~\cite{MJ:14-sv,SEL-YZ-KL-JW:15}, however, provide a good
introduction to the topic of vehicle platoon control and its
literature.  The prototypical aim in vehicle platooning is to
asymptotically achieve a given constant inter-vehicular distance (or a
given constant headway (time) between successive
vehicles~\cite{DS-JKH-CCC-PI:94}) while ensuring all vehicles move at
a desired speed. Hence the topic is often called `string
stability'. The problem is typically formalized as an asymptotic
stabilization problem or an infinite-horizon optimal control problem,
with non-collision constraints that ensure vehicles are separated by
at least a given fixed amount.  With some
exceptions~\cite{RK-PF-JF:11, RK-etal:12, WBD-DSC:12}, constraints on
acceleration control or on state variables, such as speed limits, are
not considered. Recent research on string stability also seeks to
address the challenges that arise due to coordination via wireless
communication channels such as sampling~\cite{SO-JP-NvdW-HN:14},
communication delays~\cite{AAP-RHM-OM:14, SS-CO-SW-AJ:17} and limited
communication range~\cite{RHM-JHB:10}. The work~\cite{BB-MRJ-PM-SP:12}
examines the question of whether local feedback is sufficient to
ensure coherence of large networks under stochastic
disturbances. Given that the string stability problem has largely been
motivated by cruise control of vehicle platoons on highways, it is not
surprising that finite-time constraints on the states of the vehicles
are also usually not considered. This is a major difference with
respect to our treatment in this paper. Since we are motivated by the
problem of coordination of a group of vehicles on arterial roads with
intersections, the consideration of finite-time constraints on the
vehicle string is key.

In the literature on coordination-based intersection management, the
explicit control of platoons has been rarely considered with some
notable exceptions. The works~\cite{QJ-GW-KB-MB:12,QJ-GW-KB-MB:13}
describe a hierarchical setup, with a central coordinator verifying
and assigning reservations, and with vehicles planning their
trajectories locally to platoon and to meet the assigned schedule.
The proposed solution is based on multiagent simulations, an important
difference with respect to our approach. In~\cite{DM-SK:14}, a
polling-systems approach is adopted to assign schedules, and then
optimal trajectories for all vehicles are computed sequentially in
order. Such optimal trajectory computations are costly and depend on
other vehicles' detailed plans, and hence the system is not robust.
In this literature too, a non-collision constraint is imposed on the
vehicles. The work~\cite{HK-DC-PRK:11} is an exception in that it
requires the minimum separation between any two consecutive vehicles
to be a function of the vehicles' velocities. We call such a
constraint as a \emph{safety constraint}, which is a generalization of
a non-collision requirement.

\subsubsection*{Statement of contributions}

Motivated by intelligent management of traffic intersections, we
formulate an optimal control problem for a vehicular string with
safety requirements and finite-time specifications on the approach
time to a target region.  The first contribution is the design of a
distributed control algorithm for the vehicle string that ensures that
the vehicles satisfy the finite-time specifications, while
guaranteeing system-wide safety and subject to speed limits and
acceleration saturation. Additionally, each vehicle seeks to optimally
control its trajectory whenever safety is not immediately at risk.
The algorithm design combines three main elements: an uncoupled
controller ensuring that a vehicle arrives at the intersection at a
designated time when the preceding vehicle is sufficiently far in
front; a safe-following controller ensuring that the vehicle follows
the preceding vehicle safely when the latter is not sufficiently far
in front; and a policy to switch between these two controllers.  The
second contribution is the analysis of the convergence and performance
properties of the vehicle string under the proposed distributed
control design. We provide guarantees on vehicle safe following and
the approach times to the target region. The safe-following
specifications are such that each vehicle maintains at all times
sufficient distance from its preceding vehicle so as to have the
ability to come to a complete stop without collisions irrespective of
the preceding vehicle's control action. This notion has the advantage
of ensuring safety even in cases of communication failures, which may
not be the case for a solution computed with only non-collision
constraints. We also establish that the prescribed approach time of a
vehicle can be met provided it is sufficiently far from the actual
approach time of the previous vehicle in the string. If this is not
the case, then we also provide an upper bound on the difference
between the actual approach times of consecutive vehicles.  The third
contribution is the application to traffic intersection management of
our distributed control design.  We describe how the various
constraints and parameters of the problem can be integrated with an
intelligent intersection manager to provide it with information about
the approach time of the first vehicle in the string and a guaranteed
bound of the occupancy time of the intersection by the string.  Our
simulation study illustrates the results and provides insights on the
algorithm executions and their dependence on the problem parameters.

\subsubsection*{Organization}

The rest of the paper is organized as follows.  Section~\ref{sec:prob}
details the problem formulation and discusses its connection with
intelligent intersection management. Section~\ref{sec:design} presents
the design of the distributed control for the vehicle string and
Section~\ref{sec:evol} derives convergence and performance guarantees
on its executions.  Section~\ref{sec:sim} illustrates our results in a
simulation study. Finally, Section~\ref{sec:conc} contains concluding
remarks and our ideas for future work.

\subsubsection*{Notation}
We present here some basic notation used throughout the paper. We let
$\real$, $\realnegative$, $\integers$, $\integerspositive$, and
$\integersnonnegative$ denote the set of real, negative real, integer,
positive integer, and nonnegative integer numbers, respectively.
Given $a \le b$, $[u]_a^b$ denotes the number $u$ lower and upper
saturated by $a$ and $b$ respectively, i.e.,
\begin{equation*}
  [u]_a^b \triangleq
  \begin{cases}
    a, \quad \text{if } u \leq a,
    \\
    u, \quad \text{if } u \in [a, b],
    \\
    b, \quad \text{if } u \geq b .
  \end{cases}
\end{equation*}

\section{Problem statement}\label{sec:prob}

Consider a string of vehicles on its way to a target region as in
Figure~\ref{fig:vehicle-string}.  The vehicles are labeled
$\{1, \ldots, N\}$ and all have the same length~$L$.  The line segment
represents a road and the target region is the interval $[0, \Delta]$,
representing an intersection. The position of the (front of the)
$j^\text{th}$ vehicle is~$\xv_j$.  We assume that initially the
vehicles are yet to approach the target and hence their initial
positions belong to $\realnegative$. Without loss of generality,
vehicles are indexed in ascending order starting from the vehicle
closest to the target region at the initial time.
\begin{figure}[!htpb]
  \centering
  \includegraphics[scale=0.25]{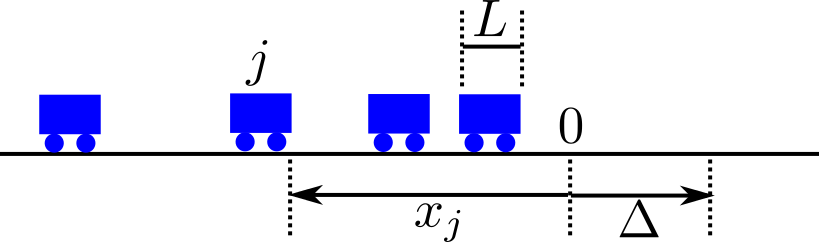}
  \caption{A string of vehicles, each of length $L$, on a road to a
    target at $0$. The position of the (front of the) $j^\text{th}$
    vehicle is $\xv_j$.}\label{fig:vehicle-string}
\end{figure}

\emph{Constraints:} The constraints on the vehicles' motion arise from
their dynamics, specifications regarding their approach to the target
region, and safety requirements. We describe them next.  The dynamics
of vehicle~$j$ is the fully actuated second-order system,
\begin{subequations}\label{eq:vehicle-dyn}
  \begin{align}
    \dxv_j(t) &= \vv_j(t) ,
    \\
    \dvv_j(t) &= \uv_j(t) ,
  \end{align}
\end{subequations}
where $\vv_j \in \real$ is the velocity and $\uv_j(t) \in [u_m, u_M]$,
with $u_m\le 0 \le u_M$, is the input acceleration. The vehicles must
respect a maximum speed limit, $v^M$, imposed on the road ($\vv_j(t)$
must belong to $[0, v^M]$ for all $t \geq 0$).

Each vehicle $j$ is given an exogenous \emph{prescribed approach time}
$\tau_j$ -- the time at which vehicle $j$ is to reach the beginning of
the target region, i.e., the origin. Clearly, an arbitrary set of $\{
\tau_j \}_{j=1}^N$ may not be feasible. We let $\Ta_j$ denote the
actual \emph{approach time} of vehicle $j$ at target region, i.e.,
$\xv_j(\Ta_j) = 0$. Further, we require that the velocity of each
vehicle $j$ at its approach time $\Ta_j$ and subsequently be at least
$\nunom$.  Finally, vehicles are required to maintain a safe distance
between them at all times. Specifically, the minimum separation of
vehicles at any given time must be such that there always exists a
control action for each vehicle to come to a stop safely even without
coordination. Clearly, such a safe-following distance needs to be a
function of the vehicles' velocities, which we denote by
$\Dc(\vv_{j-1}(t), \vv_j(t))$ for the pair of vehicles $j-1$ and
$j$. The formal definition of this function is postponed to the next
section. Then, the safety constraint for $j \in \{2, \ldots, N\}$ is
\begin{equation*}
  x_{j-1}(t) - x_j(t) \geq \Dc(\vv_{j-1}(t), \vv_j(t)), \quad \forall
  t \geq 0.
\end{equation*}

\begin{remark}\longthmtitle{Safety constraints are more robust to loss
    of coordination than non-collision constraints}\label{rem:safety}
  {\rm Typically in the literature, with the exception
    of~\cite{HK-DC-PRK:11}, non-collision constraints are
    imposed. These take the form $x_{j-1}(t) - x_j(t) \geq
    L$. However, non-collision constraints are not robust to loss of
    coordination due to communication failures or otherwise. On the
    other hand, the stricter safety constraints guarantee a higher
    degree of robustness in that there always exists a control action
    for each vehicle to come to a stop safely even with a loss of
    coordination.} \oprocend
\end{remark}

\emph{Objective:} Under the constraints specified above, we seek a
design solution that minimizes the cost function~$C$ modeling 
cumulative fuel cost,
\begin{equation}\label{eq:cost-fun}
  C \triangleq \sum_{j \in \{1, \ldots, N\}} \int_{0}^{\Tx_j} |
  \uv_j | \mathrm{d}t ,
\end{equation}
where $\Tx_j$ is the time at which the vehicle $j$ completely exits
the target region, i.e., $\xv_j(\Tx_j) = \Delta + L$.  This is an
optimal control problem with bounds on the state and control variables
and inter-vehicular safety requirements.  We seek to design a strategy
that allows the vehicle string to solve it in a distributed fashion.
By distributed, we mean that each vehicle $j$ receives information
only from vehicle $j-1$, so that collisions can be avoided and the
algorithm is implementable in real time.  We envision the resulting
distributed vehicular control to be a part of a larger traffic
management solution.

Given the distributed and real-time requirements on the algorithmic
solution, we do not insist on exact optimality and instead focus on
obtaining sub-optimal solutions based on switching between an optimal
control mode and a safe-following mode.  In addition, we also seek to
characterize conditions under which the approach time of each vehicle
is equal to its prescribed approach time (i.e., $\Ta_j = \tau_j$ for
$j \in \until{N}$). Failing feasibility ($\Ta_j \neq \tau_j$ for some
$j$), we aim for our solution to minimize the intersection's
\emph{occupancy time} $\Tx_N - \Ta_1$ of the vehicle string and seek
to provide an upper bound for it.

\begin{remark}\longthmtitle{Connection with intelligent intersection
    management}\label{rem:intersection-management}
  {\rm The motivation for the problem considered here is to enable
    control of a string of computer-controlled, networked vehicles on
    roads with intersections. By networked vehicles, we mean vehicles
    equipped with vehicle-to-vehicle (V2V) and
    vehicle-to-infrastructure (V2I) communication capabilities. We
    envision a system where vehicles communicate their state to an
    \emph{intersection manager} (IM), which then prescribes a schedule
    for the usage of the intersection by the vehicles.  In this paper,
    we do not address the communication and decision making aspects
    related to the interaction between the intersection manager and
    the vehicles.  In the problem posed here, the target region
    corresponds to an intersection, the prescribed approach times are
    given by the IM to the vehicles, and the constraint of a minimum
    approach velocity ensures efficient usage of the intersection. The
    finite-time specifications, bounded controls, speed limits, and
    explicit safety constraints distinguish this work from the
    literature on string stability, which instead focuses on
    asymptotic stability or infinite horizon optimal control with only
    non-collision constraints~\cite{MJ:14-sv, SEL-YZ-KL-JW:15}. We see
    the solution to the problem formulated here as one of the many
    necessary building blocks towards the development of such
    intelligent intersection management capabilities.} \oprocend
\end{remark}

\section{Design of \localvehcontrol controller}\label{sec:design}

In this section we design the distributed vehicle control
termed~\localvehcontrol controller. To do this, we begin by
introducing two useful notions: safe-following distance, as a way of
ensuring safety at present as well as in the future, and relaxed
feasible approach times ignoring safety constraints.

\subsection{Safe-following distance}

The following notion of safe-following distance plays an instrumental
role in guaranteeing the inter-vehicle safety requirement in our
forthcoming developments.

\begin{definition}\longthmtitle{Safe-following
    distance}\label{dfn:sfd}
  The maximum braking maneuver (MBM) of a vehicle is a control action
  that sets its acceleration to $u_m$ until the vehicle comes to a
  stop, at which point its acceleration is set to $0$ thereafter.  
  A quantity $\Dc(\vv_{j-1}(t), \vv_j(t))$ is \emph{a safe-following
    distance} at time~$t$ for the consecutive vehicles $j-1$ and $j$
  if $\xv_{j-1}(t) - \xv_j(t) \geq \Dc(\vv_{j-1}(t), \vv_j(t)) \ge L$
  and, if each of the two vehicles were to perform the MBM, then they
  would be safely separated, $\xv_{j-1} - L \geq \xv_j$, until they
  come to a complete stop. \oprocend
\end{definition}

Given the notion of safe-following distance, we ensure inter-vehicular
safety by requiring for all $j \in \{ 2, \ldots, N \}$ that
\begin{equation}\label{eq:safety-constraints}
  \xv_{j-1}(t) - \xv_j(t) \geq \Dc(\vv_{j-1}(t), \vv_j(t)), \ \forall
  t .
\end{equation}
According to Definition~\ref{dfn:sfd}, a safe-following distance is
not uniquely defined, which in fact provides a certain leeway in
designing the local vehicle control. The following result identifies a
specific safe-following distance.

\begin{lemma}\longthmtitle{Safe-following distance as a function of
    vehicle velocities}\label{lem:sf-dist}
  Let $j-1$ and $j$ be a pair of vehicles, with $j$ following
  $j-1$. Then, the continuous function~$\Dc$ defined by
  \begin{multline}\label{eq:sf-dist}
    \Dc(\vv_{j-1}(t), \vv_j(t)) =
    \\
    L + \max \Big\{ 0, \frac{ 1 }{ -2u_m } \left( (\vv_{j}(t))^2 -
      (\vv_{j-1}(t))^2 \right) \Big\} ,
  \end{multline}
  provides a safe-following distance at time~$t$ for the pair of
  vehicles $j-1$ and~$j$.
\end{lemma}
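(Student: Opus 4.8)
The plan is to verify directly the two defining properties of a safe-following distance from Definition~\ref{dfn:sfd} for the candidate function in~\eqref{eq:sf-dist}. Continuity and the bound $\Dc \geq L$ are immediate, since the right-hand side of~\eqref{eq:sf-dist} is $L$ plus the maximum of two continuous functions and that maximum is nonnegative. The substance lies in the braking condition: assuming $\xv_{j-1}(t) - \xv_j(t) \geq \Dc(\vv_{j-1}(t), \vv_j(t))$ and that both vehicles execute the MBM from time $t$ onward, I must show that the gap $\xv_{j-1} - \xv_j$ stays at least $L$ until both vehicles have stopped.

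First I would record the elementary kinematics of the MBM. Resetting the clock so the maneuver begins at $s=0$ with velocities $\vv_{j-1}(0)=\vv_{j-1}$ and $\vv_j(0)=\vv_j$, a vehicle starting from speed $v \geq 0$ and decelerating at $u_m<0$ comes to rest at time $v/(-u_m)$ after advancing a total braking distance $v^2/(-2u_m)$; its position is monotonically nondecreasing and then fixed. I would then track the relative displacement $r(s) \triangleq \xv_{j-1}(s)-\xv_j(s)$ and show it attains its minimum, over the horizon until both vehicles stop, at a single explicitly identifiable instant, governed by the sign of $\dot{r}(s)=\vv_{j-1}(s)-\vv_j(s)$.

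I would split into the two cases matching the $\max$ in~\eqref{eq:sf-dist}. If $\vv_j \leq \vv_{j-1}$, then while both brake $\dot{r}=\vv_{j-1}-\vv_j \geq 0$ is constant and nonnegative, and once the slower follower has stopped the leader is still moving forward, so $\dot{r}\geq 0$ persists; hence $r$ is nondecreasing and $r(s) \geq r(0) \geq \Dc = L$ throughout. If instead $\vv_j > \vv_{j-1}$, the follower is faster and stops last, so $\dot{r}$ is strictly negative until the follower comes to rest; thus $r$ decreases monotonically and its minimum occurs exactly at the follower's stopping time, when both vehicles have completed their braking distances.

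It then remains to evaluate this minimum and compare it with $L$. At the follower's stopping time the leader has advanced $\vv_{j-1}^2/(-2u_m)$ and the follower $\vv_j^2/(-2u_m)$, so the minimum gap equals $r(0) - (\vv_j^2 - \vv_{j-1}^2)/(-2u_m)$. Substituting the hypothesis $r(0) \geq \Dc = L + (\vv_j^2 - \vv_{j-1}^2)/(-2u_m)$ cancels the velocity-dependent terms and yields a minimum gap of at least $L$, which is precisely the safe-separation requirement $\xv_{j-1} - L \geq \xv_j$. The only real obstacle is the bookkeeping of which vehicle stops first and the correct placement of the worst-case instant; once that is settled, the inequality reduces to a one-line cancellation against the definition of $\Dc$.
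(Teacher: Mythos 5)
Your proof is correct and follows essentially the same route as the paper's: the same case split on the sign of $\vv_j - \vv_{j-1}$, the same stopping-distance formula $v^2/(-2u_m)$, and the same final cancellation against the definition of $\Dc$. The only difference is that you explicitly justify, via the sign of $\dot r$, that the worst-case gap occurs at the follower's stopping time (resp.\ at the initial time), a point the paper's proof states without elaboration.
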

\begin{proof}
  If a vehicle $j$ with dynamics~\eqref{eq:vehicle-dyn} were to
  perform the MBM at the current time $t$ until it comes to a complete
  stop at $\tstop_{j} = - \vv_j(t)/u_m$, then
  \begin{equation*}
    \xv_j(\tstop_{j}) = \xv_j(t) + \frac{ (\vv_j(t))^2 }{ -2u_m } .
  \end{equation*}
  If $\vv_j(t) \geq \vv_{j-1}(t) \geq 0$, then the safe-following
  distance is found by setting
  \begin{equation*}
    \xv_{j-1}(\tstop_{j-1}) - \xv_j(\tstop_{j}) \geq L .
  \end{equation*}
  If on the other hand $\vv_{j-1}(t) \geq \vv_{j}(t) \geq 0$, then the
  vehicles are in fact closest at time $t$ and the condition
  $\xv_{j-1}(t) - \xv_j(t) \geq L$ is sufficient to ensure subsequent
  safety. Hence~\eqref{eq:sf-dist} provides a safe following distance.
\end{proof}

The safe-following distance function~$\Dc$ defined
in~\eqref{eq:sf-dist} has the following useful monotonicity
properties: if the first argument is fixed, then the function is
monotonically non-decreasing; instead, if the second argument is
fixed, then the function is monotonically non-increasing.

\begin{remark}\longthmtitle{Intra-branch safety under communication
    failure}
  {\rm Note that the safety constraint~\eqref{eq:safety-constraints}
    for each pair of consecutive vehicles on the same branch is more
    than just non-collision constraint. The safety constraints always
    ensure, for each vehicle, the existence of a control action that
    can safely bring the vehicle to a complete stop irrespective of
    the actions of the vehicle preceding it. Thus, in particular, if
    communication were to break down between any pair of consecutive
    vehicles and if the communication failure were detected then the
    following vehicle can safely come to a stop.
 } \oprocend
\end{remark}

\subsection{Feasible approach times ignoring safety
  constraints}\label{sec:relax-feas-app-times}

Here we provide bounds on the feasible approach times by ignoring the
safety constraints.  We rely on these bounds in our controller design
later to ensure the existence of an optimal controller that guarantees
a vehicle arrives at the intersection at a designated time when the
preceding vehicle is sufficiently far in front.

We start by defining the earliest and latest times of approach of each
vehicle at the target region, ignoring other vehicles. Formally, let
$\taue_j$ be the earliest time vehicle $j$ can reach the target region
while ignoring the safety
constraints~\eqref{eq:safety-constraints}. This time can be computed
by considering the trajectory with the initial condition $(x_j(0),
v_j(0))$ and the control policy with maximum acceleration $(u_j =
u_M)$ until $v_j(t) = v_M$ and zero acceleration thereafter. It can be
easily verified that $\taue_j = \Tc( -\xv_j(0), \vv_j(0) )$, where
\begin{align}\label{eq:Tc}
  &\Tc(d, v) \triangleq \notag\\
  &
  \begin{cases}
    \frac{ \sqrt{ 2 u_M d + v^2 } - v }{ u_M }, \ & 2 u_M d \leq
    (v^M)^2 - v^2 , \\
    \frac{ v^M - v }{ u_M } + \frac{ 2 u_M d - (v^M)^2 + v^2 }{ 2 u_M
      v^M }, \ & 2u_M d \geq (v^M)^2 - v^2 .
  \end{cases}
\end{align}
Similarly, let $\taul_j$ be the latest time vehicle $j$ can reach the
target region ignoring the safety constraints by considering
trajectories with maximum deceleration. Note that this could possibly
result in $\taul_i = \infty$.

An important observation is that there might not exist any approach
time $\Ta_j$ during the interval $[\taue_j, \taul_j]$ such that the
minimum approach velocity constraint $\vv_j(\Ta_j) \geq \nunom$ is
satisfied (even when safety constraints are ignored).  The following
result presents a sufficient condition that guarantees the existence
of such approach times even when safety constraints are considered.

\begin{lemma}\longthmtitle{Existence of a feasible approach
    time}\label{lem:initcon-assump}
  Suppose the initial position of vehicle $j \in \until{N}$ satisfies
  \begin{align}\label{eq:suff-cond}
    \xv_j(0) \leq \frac{ (v^M)^2 }{ 2u_m } - \frac{ (\nunom)^2 }{ 2u_M
    } ,
  \end{align}
  then $\taul_j = \infty$, and for any $\tau \in [\taue_j, \infty)$
  there exists a control action that, ignoring the safety
  constraints~\eqref{eq:safety-constraints}, ensures that $\xv_j(\tau)
  = 0$ and $\vv_j(\tau) \geq \nunom$.  Furthermore, if the safety
  constraints~\eqref{eq:safety-constraints} are satisfied initially at
  time $0$, then the set of feasible approach times $[\bar{\tau}_j^e,
  \infty)$ is non-empty and $\bar{\tau}_j^e \geq \taue_j$ for all $j
  \in \until{N}$.
\end{lemma}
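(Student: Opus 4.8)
The plan is to treat the two safety-free claims (that $\taul_j=\infty$ and that every $\tau\in[\taue_j,\infty)$ is attainable with $\vv_j(\tau)\ge\nunom$) as essentially kinematic facts about a single decoupled vehicle, and then to bootstrap from them to the safety-constrained statement. First I would rewrite the sufficient condition~\eqref{eq:suff-cond} (for $u_m<0$) as a statement about stopping distance. If vehicle $j$ applies the MBM from its initial state, it comes to rest at $\xv_j(0)+(\vv_j(0))^2/(-2u_m)$, and using $\vv_j(0)\le v^M$ together with~\eqref{eq:suff-cond} one gets
\[
  \xv_j(0) + \frac{(\vv_j(0))^2}{-2u_m} \le \xv_j(0) - \frac{(v^M)^2}{2u_m} \le -\frac{(\nunom)^2}{2u_M} < 0 .
\]
Thus the vehicle can stop strictly before the target with a margin of at least $(\nunom)^2/(2u_M)$. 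Since a stopped vehicle may hold $\uv_j=0$ indefinitely and then resume, arbitrarily large approach times are achievable, so $\taul_j=\infty$.

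Next I would establish reachability of the whole interval with the terminal-velocity bound. The margin above is exactly the distance needed to re-accelerate from rest to $\nunom$ under $u_M$, so from the stopped position the vehicle can accelerate and arrive at the target with velocity $\ge\nunom$. To cover all of $[\taue_j,\infty)$ I would exhibit a one-parameter family of controls interpolating between the two extremes: maximum acceleration (giving $\taue_j$, with terminal speed $v^M\ge\nunom$) and ``decelerate to a stop, wait for time $w$, then accelerate'' (whose approach time increases to $\infty$ as $w\to\infty$). Since the approach time is continuous and monotone in the parameter, the intermediate value theorem yields every intermediate $\tau$, and the terminal-velocity constraint $\vv_j(\tau)\ge\nunom$ is preserved throughout precisely because of the margin.

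For the final claim I would argue three points. Because the safety-free feasible set is $[\taue_j,\infty)$ and imposing~\eqref{eq:safety-constraints} can only remove times, the earliest feasible-with-safety time satisfies $\bar\tau_j^e\ge\taue_j$. For non-emptiness, since safety holds at time $0$ and, by Lemma~\ref{lem:sf-dist}, vehicle $j$ can always execute an MBM to stay safely separated, it can remain behind vehicle $j-1$ while the latter advances and eventually vacates, so all sufficiently large approach times stay feasible. For the up-closed interval structure, I would show any feasible $\tau$ can be delayed to any $\tau'>\tau$: slowing vehicle $j$ enlarges the gap $\xv_{j-1}-\xv_j$ and, by the monotonicity of $\Dc$ noted after Lemma~\ref{lem:sf-dist} (non-decreasing in the follower's velocity), lowers the required safe distance, so safety is maintained under delay; continuity then lets us hit $\tau'$ exactly while keeping $\vv_j(\tau')\ge\nunom$.

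The main obstacle is this last part. Unlike the first two claims, which concern a single decoupled vehicle and reduce to the kinematic computation above, showing that the feasible set remains an up-closed interval requires constructing delayed trajectories that simultaneously respect safety relative to the given trajectory of vehicle $j-1$, the speed limits, and the terminal-velocity bound. Making the ``slow down to delay'' deformation precise—and verifying that it never transiently violates~\eqref{eq:safety-constraints}—is the delicate step, and the monotonicity properties of $\Dc$ are exactly what make it go through.
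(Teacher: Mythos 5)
Your proposal follows essentially the same route as the paper's proof: condition~\eqref{eq:suff-cond} is read as ``the vehicle can stop before the target with exactly the margin $(\nunom)^2/(2u_M)$ needed to re-accelerate to $\nunom$,'' which gives $\taul_j=\infty$ and, by interpolating between the max-acceleration trajectory and stop--wait--accelerate trajectories, covers all of $[\taue_j,\infty)$; the safety-constrained claim is then obtained from the same ``decelerate safely to a stop, wait, accelerate'' construction with delays preserving feasibility. You actually make explicit the stopping-distance computation and the continuity argument that the paper only sketches (note only that the terminal speed under maximum acceleration need not equal $v^M$, though your margin argument still yields $\vv_j(\taue_j)\ge\nunom$), so this is a correct proof in the same spirit.
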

\begin{proof}
  The condition on $\xv_j(0)$ implies that vehicle $j$ can come to a
  complete stop, wait for an arbitrarily long time and then accelerate
  to a speed of at least $\nunom$ before arriving at the beginning of
  the target region so that $\taul_j = \infty$. Further, it also means
  that $\vv(\taue_j) \geq \nunom$ under the control action used for
  computing $\taue_j$. Thus, for any $\tau \in [\taue_j, \infty)$
  there exists a control action that, ignoring the safety
  constraints~\eqref{eq:safety-constraints}, ensures that
  $\xv_j(\tau) = 0$ and $\vv_j(\tau) \geq \nunom$.

  If in addition, safety constraints~\eqref{eq:safety-constraints} are
  satisfied initially, then existence of a feasible approach time is
  guaranteed because vehicle $j$ can safely decelerate at the maximum
  rate until it comes to a complete stop while ensuring safety with
  vehicles $j-1$ and $j+1$, then wait for enough time to avoid
  collision with vehicle $j-1$ and accelerate back to $\nunom$ before
  reaching the target region at $\Ta_j$. Finally, if $\Ta_j = t_1$ is
  feasible, then so is $\Ta_j = t_2$ for all $t_2 \geq t_1$ by
  increasing the deceleration time or the wait time.
\end{proof}

Note that $[\bar{\tau}_j^e, \infty)$, the actual set of feasible
approach times for vehicle $j$, depends on all the constraints, the
initial conditions and $\tau_i$ for all the vehicles $i \in \{i,
\ldots, N\}$. As a result, it is not readily computable. In contrast,
the relaxed bound $[\taue_j, \infty)$ is easy to compute and depends
only on the data related to vehicle $j$, which makes it useful in the
control design procedure.

In the rest of the paper we assume that conditions of
Lemma~\ref{lem:initcon-assump} are satisfied for all vehicles $j \in
\until{N}$ (so that feasible approach times satisfying the minimum
velocity requirement are guaranteed to exist) and that $\tau_j \in
[\taue_j, \infty)$, for all $j \in \until{N}$.

\subsection{Controller design}\label{sec:dist-control}

Here we introduce our distributed controller design, which is composed
by three main elements: (i) an uncoupled controller ensuring that the
vehicle arrives at the intersection at a designated time if the
presence of all other vehicles is ignored. This controller is applied
when the preceding vehicle is sufficiently far in front, (ii) a
safe-following controller ensuring that the vehicle follows the
preceding vehicle safely when the latter is not sufficiently far in
front; and (iii) a rule to switch between the two controllers. 

\subsubsection{Uncoupled controller}

We let
\begin{equation*}
  (t, \xv_j, \vv_j) \mapsto \guc(\tau_j, t, \xv_j,
  \vv_j) 
\end{equation*}
be a feedback controller that ensures $\xv_j(\tau_j) = 0$ for the
dynamics~\eqref{eq:vehicle-dyn} starting from the current state
$(\xv_j(t), \vv_j(t))$ at time $t$, respecting the control and
velocity constraints, but not necessarily the inter-vehicle safety
constraints. We refer to it as the \emph{uncoupled} controller. Here,
we let $\guc$ be the optimal feedback controller that is obtained in
the sense of receding horizon control (RHC)~\cite{JBR:00}. The cost
function that is minimized in an open-loop manner as part of RHC is
\begin{align*}
  \int_{t}^{\tau_j} |\uv_j(s)| ds .
\end{align*}
For simplicity we restrict the candidate velocity profiles, each of
which determines uniquely a control trajectory $\uv_j$, to the two
classes shown in Figure~\ref{fig:pwc-control}. Thus, the optimization
variables in RHC are $a_1$, $a_2$ (the areas of the indicated
triangles), $\vv_j(\tau_j)$, $\nu^l$ and $\nu^u$. The constraints are
$\nu^l \in [0, \vv_j(t)]$, $\nu^u \in [\vv_j(t), v^M]$,
$\vv_j(\tau_j) \in [\nunom, v^M]$, $a_1, a_2 \geq 0$ and that the
total area under the curve (corresponding to the distance traveled) is
equal to $-\xv_j(t)$.

\begin{remark}\longthmtitle{Alternative implementation of the
    uncoupled feedback
    controller}\label{rem:alternative-implementation}
  {\rm In contrast to the receding horizon approach, the uncoupled
    controller $\guc$ may also be implemented as a feedback
    controller. This may be done by analytically solving off-line the
    initial optimal control action as a function of
    $\tau_j, t, x_j, v_j$. In our particular problem, due to the
    specific structure of the candidate solutions, shown in
    Figure~\ref{fig:pwc-control}, this analytical computation could be
    done exhaustively. Such computations would result in a $\guc$ that
    is a switched controller (with analytical expressions) as a
    function of $\tau_j, t, x_j, v_j$. Further, as a vehicle's state
    $(x_j, v_j)$ and time $t$ evolve continuously with time $t$, there
    are only a few modes to which $\guc$ could switch to at any given
    time. Thus, in practice, such a switched feedback controller
    $\guc$ could be implemented quite efficiently.
  } \oprocend
\end{remark}

\begin{figure}[!htb]
  \centering \subfigure[\label{fig:pwc_down}]
  {\includegraphics[width=0.35\textwidth]{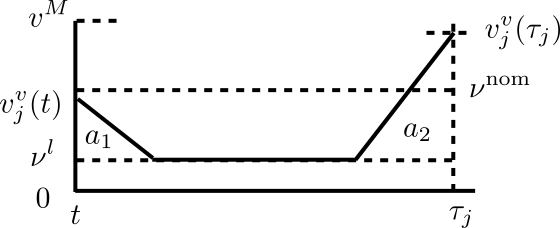}}

  \subfigure[\label{fig:pwc_up}]
  {\includegraphics[width=0.35\textwidth]{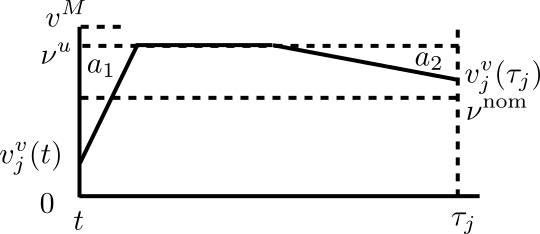}}
  \vspace*{-1ex}
  \caption{(a) and (b) show the two classes of open-loop candidate
    velocity profiles. The optimal profile takes one of the two forms
    depending on the velocity $\vv_j(t)$, $\nunom$, $v^M$, $\tau_j$
    and the distance to go~$-\xv_j(t)$. The uncoupled feedback
    controller $\guc$ may be obtained by executing the open-loop
    optimal control in a receding horizon manner.
  }\label{fig:pwc-control}
\end{figure}

\begin{remark}\longthmtitle{Optimality of the
    controller}\label{rem:optim-pwc}
  {\rm Assuming there exists a feasible controller that ensures the
    vehicle $j$ approaches the intersection at $\tau_j$ with a minimum
    velocity of $\nunom$, ignoring the safety constraints and given
    the current time $t$ and the vehicle state $(\xv_j(t), \vv_j(t))$,
    then there exists an optimal solution with piecewise-constant-rate
    velocity profiles as shown in Figure~\ref{fig:pwc-control}. We can
    see this statement to be true by observing that in a given time
    $\tau_j - t$, the minimum and maximum travel distances are
    obtained with velocity profiles belonging to the family depicted
    in Figure~\ref{fig:pwc-control}, and that every other intermediate
    travel distance is obtained by a continuous variation of the
    velocity profiles within the family. } \oprocend
\end{remark}

It is worth noticing that the control $\guc$ assumes the presence of
no other vehicles. Thus, the actual approach time, $\Ta_j$, of the
vehicle $j$ may be later than $\tau_j$. Note that $\Ta_j$ could not be
earlier than $\tau_j$ because each vehicle $j$ receives information
only from vehicle $j-1$ and it never attempts to approach the
intersection before~$\tau_j$.

Note that a feasible $\guc$ exists for each vehicle at $t = 0$. This
is because under the conditions of Lemma~\ref{lem:initcon-assump}, the
feasible approach times ignoring the safety constraints is $[
\taue_{j}, \infty )$ and we assume that $\tau_j \in [ \taue_{j},
\infty )$. However, at a future time $t$, such a feasible $\guc$ might
not exist because the vehicle is slowed down by preceding vehicles and
no control exists to ensure $\Ta_j = \tau_j$ along with the other
constraints. Additionally, for $t > \Ta_j$, i.e., after the vehicle
enters the target region, the optimal controller is not well defined
and does not exist.  As a shorthand notation, we use $\exists \Fc_j$
(respectively $\nexists \Fc_j$) to denote the existence (respectively,
lack thereof) of an optimal uncoupled control. In order for the
control $\guc$ to be well defined at all times and for all states, we
extend it as
\begin{equation*}
  \guc(\tau_j, t, \xv_j, \vv_j) \triangleq u_M, \quad \text{if }
  \nexists \Fc_j .
\end{equation*}

\subsubsection{Controller for safe following}

As mentioned earlier, this controller is applied only when a vehicle
is sufficiently close to the vehicle preceding it. Besides maintaining
a safe-following distance, the controller must also ensure that the
resulting evolution of the vehicles does not result in undue delays in
approach times. Here, we present a design to achieve these goals. For
a pair of vehicles $j-1$ and $j$, we define the \emph{safety ratio} as
\begin{equation}\label{eq:sigma}
  \sigma_j(t) \triangleq \frac{ \xv_{j-1}(t) - \xv_j(t) }{ 
    \Dc(\vv_{j-1}(t), \vv_j(t)) } ,
\end{equation}
which is the ratio of the actual inter-vehicle distance to the
safe-following distance. Hence, the
requirement~\eqref{eq:safety-constraints} can be equivalently
expressed as stating that $\sigma_j (t)$ should remain above $1$ at
all times. Notice from the definition~\eqref{eq:sf-dist} of the
safe-following distance that, if $\vv_{j-1}(t) > \vv_j(t)$, then
$\sigma_j(t)$ increases at time $t$ and safety is guaranteed. Thus, it
is sufficient to design a controller that ensures safe following when
$\vv_j(t) \geq \vv_{j-1}(t)$. For vehicle $j$, we denote $\zeta_j
\triangleq ( \vv_{j-1}, \vv_j, \sigma_j )$. Define the
\emph{unsaturated} controller~$\gus$ by
\begin{align*}
  \gus(\zeta_j, &\uv_{j-1}) \triangleq
  \\
  &
  \begin{cases}
    \uv_{j-1}, & \text{if } \vv_j = 0 ,
    \\
    \left( \frac{ \vv_{j-1} }{ \vv_j } \left( 1 + \sigma_j \frac{
          \uv_{j-1} }{ -u_m } \right) -1 \right) \left( \frac{ -u_m }{
        \sigma_j } \right), & \text{if } \vv_j > 0 .
  \end{cases}  
\end{align*}
The rationale behind this definition is as follows. Since it is
sufficient to design a controller that ensures safe following when
$\vv_j(t) \geq \vv_{j-1}(t)$, if $\vv_j = 0$, then we need to consider
only the case $\vv_{j-1} = 0$. In this case, the definition of $\gus$
ensures that the vehicle $j$ stays at rest as long as vehicle $j-1$ is
at rest, and starts moving only when $j-1$ starts moving
again. Further, since the relative velocity and acceleration in this
case would be zero, we see that $\sigma_j$ stays constant. On the
other hand, if the vehicle is moving, $\vv_j > 0$, then $\guc$ is
designed to make sure that $\sigma_j$ stays constant (we show this
formally in Lemma~\ref{lem:coupling-set}), thus ensuring safety.
However, in this latter case, $\gus$ might cause $\vv_j$ to
exceed~$v^M$. Further, we would like the vehicle to continue using the
optimal uncoupled controller if it does not affect the safety by
decreasing $\sigma_j$.  These considerations motivate our definition
of the \emph{safe-following} controller as
\begin{multline}\label{eq:sf-control}
  \gsf(t, \zeta_j, \uv_{j-1}) \triangleq
  \\
  \min \lbrace \guc(\tau_j, t, \xv_j, \vv_j), \gus(\zeta_j, \uv_{j-1})
  \rbrace .
\end{multline}

\subsubsection{\localvehcontrol controller}

Here, we design the local vehicle controller by specifying a rule to
switch between the uncoupled controller $\guc$ and the safe-following
controller~$\gsf$. To make precise whether two vehicles are
sufficiently far from each other, we introduce the \emph{coupling set}
$\Cc_s$ defined by
\begin{equation}\label{eq:coupling-set}
  \Cc_s \triangleq \{ ( \mathrm{v}_1, \mathrm{v}_2, \sigma ) :
  \mathrm{v}_2 \geq \mathrm{v}_1 \text{ and } \sigma \in [ 1, \sigma_0
  ] \} ,
\end{equation}
where $\sigma_0 > 1$ is a design parameter.  The value of this
parameter marks when the safety ratio is considered to be sufficiently
close to $1$ that action is required to prevent inter-vehicle
collision.  This criteria is more conservative (resp. aggressive) the
further (resp. closer) $\sigma_0$ is to $1$. If $\zeta_j \in \Cc_s$,
then vehicle $j$ is going at least as fast as the vehicle in front of
it, and their safety ratio is sufficiently close to $1$ that action is
required.  With this in mind, we define the \localvehcontrol
controller for vehicle $j$, to make sure it uses the safe-following
controller when it is in the coupling set, and the uncoupled
controller otherwise. Formally,
\begin{align}\label{eq:vehicle-control}
  \uv_j(t) =
  \begin{cases}
    \guc, &\text{if } \zeta_j \notin \Cc_s, \ \vv_j < v^M ,
    \\
    [ \guc ]_{u_m}^0, &\text{if } \zeta_j \notin \Cc_s, \ \vv_j = v^M ,
    \\
    \gsf, &\text{if } \zeta_j \in \Cc_s, \ \vv_j < v^M ,
    \\
    [ \gsf ]_{u_m}^0, &\text{if } \zeta_j \in \Cc_s, \ \vv_j = v^M .
  \end{cases}
\end{align}
Note that $[ \guc ]_{u_m}^0 \neq \guc$ only if $\nexists \Fc_j$.

\section{Evolution of the vehicle string}\label{sec:evol}

In this section, we analyze the evolution of the vehicle string under
the distributed controller designed in
Section~\ref{sec:dist-control}. Specifically, we characterize to what
extent the controller allows the vehicles to meet the specifications
on the vehicle string regarding safety and approach to the target
region.

\subsection{Vehicle behavior under safe following}

Here, we study the dynamical behavior of the vehicles when they are in
the coupling set, i.e., when they operate under the safe-following
controller~$\gsf$ in~\eqref{eq:sf-control}.  The next result
identifies conditions under which the safety ratio remains constant
and the unsaturated controller exceeds the maximum acceleration.

\begin{lemma}\longthmtitle{Vehicle behavior in the coupling
    set}\label{lem:coupling-set}  
  For $j \in \ntil{N}$, let $t \in \realnonnegative$ such that
  $\zeta_j(t) = (v_{j-1}(t),v_{j}(t),\sigma_j(t)) \in \Cc_s$ and
  $\uv_{j-1}(t) \in [u_m, u_M]$. Then, the following hold:
  \begin{enumerate}
  \item $\gus(\zeta_j, \uv_{j-1}) \in [u_m, u_M]$,
  \item If $\vv_j < v^M$ and $\gsf(t, \zeta_j, \uv_{j-1}) =
    \gus(\zeta_j, \uv_{j-1})$ or if $\vv_j = v^M$ and $\gsf(t,
    \zeta_j, \uv_{j-1}) = [\gsf(t, \zeta_j, \uv_{j-1})]_{u_m}^0 =
    \gus(\zeta_j, \uv_{j-1})$, then $\dot{\sigma}_j = 0$,
  \item If $\vv_j = \vv_{j-1} \geq 0$ and $\gsf(t, \zeta_j, \uv_{j-1})
    = \gus(\zeta_j, \uv_{j-1})$, then $\dot{\sigma}_j = 0$ and $\uv_j
    = \uv_{j-1}$,
  \item If $\vv_j = v^M$, then $\gus(\zeta_j, \uv_{j-1}) \geq
    [\gus(\zeta_j, \uv_{j-1})]_{u_m}^0 = 0$ only if
    \begin{equation*}
      \vv_{j-1} \geq \ulinevv \triangleq
      \frac{ -u_m v^M }{ -u_m + \sigma_0 u_M } .
    \end{equation*}
  \end{enumerate}
\end{lemma}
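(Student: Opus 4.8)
The plan is to verify the four claims essentially by direct computation, organized around a single identity: prescribing $\uv_j = \gus(\zeta_j, \uv_{j-1})$ is precisely what keeps the safety ratio $\sigma_j$ constant. Throughout, the hypothesis $\zeta_j \in \Cc_s$ gives $\vv_j \geq \vv_{j-1} \geq 0$ and $\sigma_j \in [1, \sigma_0]$; in particular $\vv_j^2 - \vv_{j-1}^2 \geq 0$, so on the relevant region the safe-following distance from~\eqref{eq:sf-dist} reduces to the smooth expression $\Dc = L + (\vv_j^2 - \vv_{j-1}^2)/(-2u_m)$. Differentiating $\sigma_j = (\xv_{j-1}-\xv_j)/\Dc$ with this form of $\Dc$ gives $\dot{\Dc} = (\vv_j \uv_j - \vv_{j-1}\uv_{j-1})/(-u_m)$, so that $\dot{\sigma}_j = 0$ is equivalent to $\vv_{j-1} - \vv_j = \sigma_j \dot{\Dc}$; solving this relation for $\uv_j$ when $\vv_j > 0$ returns exactly the second branch of $\gus$. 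This identity is the engine for parts (b) and (c).

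For (a), I would split on the two branches of $\gus$. When $\vv_j = 0$, the inequalities $\vv_j \geq \vv_{j-1} \geq 0$ force $\vv_{j-1} = 0$, so $\gus = \uv_{j-1} \in [u_m, u_M]$ by hypothesis. When $\vv_j > 0$, setting $r \triangleq \vv_{j-1}/\vv_j \in [0,1]$ lets me rewrite the second branch in the affine form $\gus = (r-1)(-u_m)/\sigma_j + r\,\uv_{j-1}$. The upper bound $\gus \leq u_M$ is then immediate, since the first summand is nonpositive and $r\,\uv_{j-1} \leq r u_M \leq u_M$. For the lower bound I would use $\sigma_j \geq 1$ to get $(r-1)(-u_m)/\sigma_j \geq (1-r)u_m$ and $r\,\uv_{j-1} \geq r u_m$, whose sum is exactly $u_m$.

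For (b), each listed hypothesis forces the applied control $\uv_j$, read off from~\eqref{eq:vehicle-control} under $\zeta_j \in \Cc_s$, to equal $\gus$: in the case $\vv_j < v^M$ one has $\uv_j = \gsf = \gus$, and in the case $\vv_j = v^M$ one has $\uv_j = [\gsf]_{u_m}^0 = \gus$. For $\vv_j > 0$ the central identity then yields $\dot{\sigma}_j = 0$, while the degenerate case $\vv_j = 0$ (hence $\vv_{j-1} = 0$) is immediate since both $\dot{\Dc}$ and $\vv_{j-1} - \vv_j$ vanish. Claim (c) specializes $\vv_j = \vv_{j-1}$: substituting $r = 1$ collapses $\gus$ to $\uv_{j-1}$, so $\uv_j = \gsf = \gus = \uv_{j-1}$, and $\dot{\sigma}_j = 0$ follows from (b). For (d), with $\vv_j = v^M > 0$ the stated condition $[\gus]_{u_m}^0 = 0$ means $\gus \geq 0$; since $-u_m/\sigma_j > 0$, this is equivalent to $\vv_{j-1}\big(1 + \sigma_j \uv_{j-1}/(-u_m)\big) \geq v^M$. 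As the right-hand side is positive and $\vv_{j-1} \geq 0$, the bracketed factor must be positive, and bounding it above via $\uv_{j-1} \leq u_M$ and $\sigma_j \leq \sigma_0$ gives $v^M \leq \vv_{j-1}(-u_m + \sigma_0 u_M)/(-u_m)$, which rearranges to $\vv_{j-1} \geq \ulinevv$.

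The main obstacle I anticipate is not any one computation but the careful bookkeeping at the boundaries of the coupling set. Specifically, one must handle the nonsmoothness of $\Dc$ at the kink $\vv_j = \vv_{j-1}$---where the derivative should be taken in the one-sided sense valid on the $\vv_j \geq \vv_{j-1}$ side---together with the degenerate velocity case $\vv_j = 0$, and, most delicately for (b), correctly translating each saturation branch of~\eqref{eq:vehicle-control} into the clean statement $\uv_j = \gus$ so that the derivative identity applies verbatim.
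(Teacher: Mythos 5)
Your proposal is correct and follows essentially the same route as the paper: part (a) by direct bounding of $\gus$ (your affine form in $r=\vv_{j-1}/\vv_j$ is just a transparent restatement of the paper's endpoint argument in $\uv_{j-1}$), parts (b)--(c) via the same computation of $\dot{\sigma}_j$ showing $\gus$ is exactly the control that annihilates it, and part (d) by the same rearrangement with $\uv_{j-1}=u_M$, $\sigma_j=\sigma_0$. No substantive differences.
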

\begin{proof}
  For the sake of conciseness, we drop the arguments of the functions
  wherever it causes no confusion.

  (a) For $\vv_j = 0$, the claim readily follows from the definition
  of $\gus$. For fixed $\sigma_j \geq 1$, $\vv_j \geq \vv_{j-1} \geq
  0$ and $\vv_j > 0$, we see that $\gus$ is maximized and minimized
  when $\uv_{j-1} = u_M$ and $\uv_{j-1} = u_m$, respectively. The
  result then follows by observing, after some computations, that
  $\gus(\zeta_j, u_M) - u_M \leq 0$ and $\gus(\zeta_j, u_m) - u_m \geq
  0$.

  (b) and (c) From~\eqref{eq:sigma} observe that
  \begin{align*}
    \dot{\sigma}_j &= \frac{ \vv_{j-1} - \vv_j - \sigma_j
      \dot{\Dc}(\vv_{j-1}(t), \vv_j(t)) }{ \Dc(\vv_{j-1}(t), \vv_j(t))
    }
    \\
    &= \frac{ \vv_{j-1} - \vv_j - \frac{ \sigma_j }{ -u_m } ( \vv_j
      \uv_j - \vv_{j-1} \uv_{j-1} ) }{ \Dc(\vv_{j-1}(t), \vv_j(t)) }
  \end{align*}
  where we have used the fact that $\vv_j \geq \vv_{j-1}$ in the
  coupling set~$\Cc_s$. Claim~(b) now follows by substituting $\uv_j =
  \gsf = \gus$ and using the definition of $\gus$. A similar argument
  can be used to show claim~(c).

  (d) Setting $\vv_j = v^M$ in the definition of $\gus$ and using the
  fact that $\gus \geq 0$, we have
  \begin{align*}
    \vv_{j-1} \geq \frac{ -u_m v^M }{ -u_m + \sigma_j \uv_{j-1} } .
  \end{align*}
  To obtain the necessary condition on $\vv_{j-1}$, we set $\uv_{j-1}
  = u_M$ and $\sigma_j = \sigma_0$, the maximum values for each.
\end{proof}

Lemma~\ref{lem:coupling-set} identifies conditions under which we can
describe the behavior of the vehicles when they are in the coupling
set.  The unsaturated controller has been designed so as to ensure
that the safety ratio is maintained at a constant level. We know from
claim (a) that $\gus$ respects the control constraints. Thus, in claim
(b), we see that when $\vv_j < v^M$ and $\gsf = \guc$, the control
action would not violate the velocity constraints and hence ensures
that the safety ratio remains constant, i.e., $\dot{\sigma}_j = 0$.
Similarly, when $\vv_j = v^M$, if the control action $\gsf = \guc$ and
it happens to be non-positive, then again the velocity constraints
would not be violated and we have $\dot{\sigma}_j = 0$. Claim (c) is a
special case of (b) with $\vv_j = \vv_{j-1}$. In this special case, we
further have that the relative acceleration, and hence also the
relative velocity, are zero. Finally, claim (d) is a necessary
condition on the velocity of vehicle $j-1$ for $\gus$ and the
saturated $[\gus]_{u_m}^0$ to differ when $\vv_j = v^M$. In other
words, if $\vv_{j-1} \leq \ulinevv$, then from claims (d) and (b) we
have that $\dot{\sigma}_j = 0$. It is worth noting that, while
considered separately the conditions in each case of
Lemma~\ref{lem:coupling-set} might appear restrictive, when considered
all together they paint a fairly general picture.

As we have seen in Lemma~\ref{lem:coupling-set} and its
interpretation, the unsaturated controller $\gus$ has been designed
with the aim of ensuring that the safety ratio remains constant. Thus,
it would be interesting to determine conditions under which the use of
$\gus$ guarantees string stability. The next result states that, in
fact, this is the case in the absence of velocity constraints and
assuming that the leading vehicle's velocity is uniformly upper
bounded in time.

\begin{proposition}\longthmtitle{Unsaturated controller $\gus$ ensures
    string stability in the absence of velocity
    constraints} \label{prop:string-stab}
  Consider two vehicles $j-1$ and $j$, with vehicle $j$ following
  $j-1$. Suppose that the velocity of the leading vehicle $j-1$ is
  uniformly lower and upper bounded in time by $0$ and a
  constant $\bar{\Vc}$, respectively. Further suppose that no bound on
  the velocity of vehicle $j$ is imposed. If the initial condition are
  such that $\vv_j(0) \geq \vv_{j-1}(0) \geq 0$ and
  $\sigma_j(0) \in [ 1, \sigma_0 ]$, then the control policy
  $u_j = \gus$ ensures that:
  \begin{enumerate}
  \item safety is guaranteed for all time $t$ by ensuring
    $\sigma_j(t) = \sigma_j(0) \geq 1$,
  \item $\displaystyle \vv_j \leq \sqrt{ (v_j(0))^2 -
        (v_{j-1}(0))^2 + \bar{\Vc}^2 }$, for all $t \geq 0$, 
  \item $\vv_j$ asymptotically approaches $\vv_{j-1}$,
  \item $\xv_{j-1} - \xv_{j}$ asymptotically converges to
    $\sigma_j(0) L \leq \sigma_0 L$.
  \end{enumerate}
\end{proposition}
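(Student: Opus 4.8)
The plan is to analyze the closed loop directly under the prescribed policy $\uv_j(t)=\gus(\zeta_j(t),\uv_{j-1}(t))$. For claim (a) I first show the ordering $\vv_j(t)\ge\vv_{j-1}(t)$ is preserved, since this is the regime in which $\Dc=L+\frac{1}{-2u_m}(\vv_j^2-\vv_{j-1}^2)$ and in which the $\dot\sigma_j$ computation from the proof of Lemma~\ref{lem:coupling-set} applies. Writing $e\triangleq\vv_j-\vv_{j-1}$ and substituting the definition of $\gus$ into the kinematic identity $\dot e=\gus-\uv_{j-1}$ yields, for $\vv_j>0$,
\[
  \dot e=-\frac{e}{\vv_j}\Big(\frac{-u_m}{\sigma_j}+\uv_{j-1}\Big),
\]
a linear homogeneous equation in $e$; since $e(0)\ge 0$ its solution stays nonnegative, while at $\vv_j=0$ one necessarily has $\vv_{j-1}=0$ and $\gus=\uv_{j-1}$, so $e$ remains zero there. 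A continuity argument in $t$ then gives $\vv_j\ge\vv_{j-1}$ for all $t$. Substituting $\uv_j=\gus$ into the expression for $\dot\sigma_j$ in the proof of Lemma~\ref{lem:coupling-set} gives $\dot\sigma_j=0$, so $\sigma_j(t)=\sigma_j(0)\ge 1$; this is exactly the safety condition and proves (a).

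Next I would extract the monotonicity that drives the remaining claims. Using the definition of $\gus$ one checks the identity $\vv_j\gus-\vv_{j-1}\uv_{j-1}=\tfrac{u_m e}{\sigma_j}$, so that
\[
  \frac{d}{dt}\big(\vv_j^2-\vv_{j-1}^2\big)=2(\vv_j\gus-\vv_{j-1}\uv_{j-1})=\frac{2u_m e}{\sigma_j}\le 0 ,
\]
since $u_m\le 0$, $e\ge 0$ and $\sigma_j>0$. Thus $\vv_j^2-\vv_{j-1}^2$ is non-increasing and bounded below by $0$. Claim (b) is then immediate: by monotonicity $\vv_j^2-\vv_{j-1}^2\le \vv_j(0)^2-\vv_{j-1}(0)^2$, hence $\vv_j^2\le \vv_j(0)^2-\vv_{j-1}(0)^2+\vv_{j-1}^2\le \vv_j(0)^2-\vv_{j-1}(0)^2+\bar{\Vc}^2$, using $\vv_{j-1}\le\bar{\Vc}$.

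For claims (c) and (d) I would use that the monotone bounded quantity $\vv_j^2-\vv_{j-1}^2$ converges to a limit $\ell\ge 0$, and that integrating the displayed derivative gives $\int_0^\infty e\,dt<\infty$ (recall $\sigma_j$ is the constant $\sigma_j(0)$ and $-u_m>0$). Since $\vv_{j-1}$ and, by (b), $\vv_j$ are uniformly bounded, a value $\ell>0$ would keep $e=(\vv_j^2-\vv_{j-1}^2)/(\vv_j+\vv_{j-1})$ bounded away from zero, contradicting integrability; hence $\ell=0$. As $0\le e\le \vv_j+\vv_{j-1}$ and $e(\vv_j+\vv_{j-1})\to 0$, it follows that $e\to 0$, i.e. $\vv_j\to\vv_{j-1}$, which is (c). Claim (d) then follows from $\xv_{j-1}-\xv_j=\sigma_j\Dc=\sigma_j(0)\big(L+\frac{1}{-2u_m}(\vv_j^2-\vv_{j-1}^2)\big)\to\sigma_j(0)L\le\sigma_0 L$.

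I expect the main obstacle to be making the invariant $\vv_j\ge\vv_{j-1}$ rigorous: the vector field for $e$ carries a $1/\vv_j$ singularity, so the continuity/bootstrap argument must be handled carefully as $\vv_j\to 0$ (where $\vv_{j-1}\to 0$ as well), and one must verify that the $\max$ in the definition of $\Dc$ is indeed attained by its velocity-difference term along the whole trajectory, as the $\dot\sigma_j$ computation presupposes. A secondary subtlety is promoting $\liminf e=0$ to $\lim e=0$ in (c), which is precisely where the uniform bound from (b) is essential.
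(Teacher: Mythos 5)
Your proposal is correct, and for claims (a), (b) and (d) it follows essentially the same path as the paper: preservation of the ordering $\vv_j \geq \vv_{j-1}$ via the linear homogeneous equation for $e_j$, the invariance $\dot{\sigma}_j = 0$ from the Lemma~\ref{lem:coupling-set}(b) computation, and the monotone non-increase of $\vv_j^2 - \vv_{j-1}^2$ (equivalently, of $\Dc$, since $\sigma_j$ is constant these are the same statement up to the affine relation $\xv_{j-1}-\xv_j = \sigma_j(0)\Dc$). The genuine difference is in how you close claim (c). The paper obtains $\int_0^\infty e_j \, \mathrm{d}s < \infty$ from the convergence of the monotone bounded quantity $\xv_{j-1}-\xv_j$, then invokes Barbalat's lemma, which requires establishing uniform continuity of $e_j$ from the boundedness of $\uv_{j-1}$ and $\uv_j = \gus$ (the latter via Lemma~\ref{lem:coupling-set}(a)). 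You instead exploit that the monotone quantity $\vv_j^2 - \vv_{j-1}^2$ converges to some $\ell \geq 0$, rule out $\ell > 0$ by combining integrability of $e_j$ with the upper velocity bound from claim (b), and then conclude via the algebraic squeeze $e_j^2 \leq e_j(\vv_j + \vv_{j-1}) = \vv_j^2 - \vv_{j-1}^2 \to 0$. This is a more elementary and self-contained ending: it dispenses with Barbalat and with the uniform-continuity verification, at the price of making the upper bound of claim (b) load-bearing for claim (c) (which you correctly flag). Both arguments share the same two residual technicalities, namely the $1/\vv_j$ singularity as $\vv_j \to 0$ (the paper disposes of it by splitting off the case where both velocities vanish simultaneously, after which $\uv_j = \uv_{j-1}$ by definition of $\gus$) and the verification that the velocity-difference branch of the $\max$ in $\Dc$ is active throughout, which is exactly the invariance of $\vv_j \geq \vv_{j-1}$ you establish first.
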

\begin{proof}
  If there exists $s$ such that $\vv_j(s) = \vv_{j-1}(s) = 0$, then
  the result is trivially true because, by definition, $\uv_j(t) =
  \uv_{j-1}(t)$ for all $t \geq s$. Thus, in the remainder of the
  proof we assume that $\vv_j(t) > 0$ for all $t$ in addition to the
  fact that $\vv_j(t) \geq \vv_{j-1}(t) \geq 0$.
  Lemma~\ref{lem:coupling-set}(b) directly guarantees claim~(a).

  Now, let $e_j \triangleq \vv_j - \vv_{j-1}$ and observe that
    \begin{align}\label{eq:ejdot}
      \dot{e}_j &= \uv_j - \uv_{j-1} = \gus - \uv_{j-1} \notag
      \\
      &= - \left( \frac{ -u_m }{ \sigma_j } \right) \left( \frac{ e_j
        }{ \vv_j } \right) \left( \frac{ \sigma_j \uv_{j-1} - u_m }{
          -u_m } \right) \notag
      \\
      & = - \left( \frac{ \sigma_j(0) \uv_{j-1} - u_m }{ \sigma_j(0)
          \vv_j } \right) e_j ,
    \end{align}
    where in the last step we used $\sigma_j = \sigma_j(0) \geq 1$.
    Based on the discussion above, we exclude the case of $v_j = 0$.
    Thus, under such conditions, $e_j = 0$ is invariant. This implies
    that $e_j \geq 0$ for all $t \geq 0$ given the assumption on the
    initial condition. Also note that $u_m < 0$, $\uv_{j-1} \geq u_m$
    and $\vv_j \geq 0$.  As a result the following observations
      hold:
    \begin{itemize}
    \item $\dot{e}_j > 0$ only if
      $\displaystyle u_{j-1} \in [u_m, { u_m }/{ \sigma_j(0) } )$,
    \item $\dot{v}_j = u_j > 0$ only if $\displaystyle u_{j-1} > 0 > {
        u_m }/{ \sigma_j(0) }$,
    \item $\dot{v}_j = u_j > 0$ only if $\dot{e}_j < 0$.
    \end{itemize}
    The first observation follows from~\eqref{eq:ejdot}. The second is
    obtained by setting $\gus > 0$, which implies
    \begin{equation*}
      u_{j-1} > \frac{ -u_m e_j }{ \sigma_j(0) v_{j-1} } \geq 0 .
    \end{equation*}
    The final observation follows from the first two. Thus, at any
    given time, we have at least one of $e_j$ or $v_j$
    non-increasing. Motivated by this, we next show $v_j$ is bounded.
    Indeed, from~\eqref{eq:sigma} and the fact $\sigma_j(t) =
    \sigma_j(0)$,
    \begin{equation*}
      \sigma_j(0) \dot{\Dc}(\vv_{j-1}(t), \vv_j(t)) = - e_j \leq 0 ,
    \end{equation*}
    which implies that $\Dc$ is non-increasing because $e_j \geq 0$.
    Then, from~\eqref{eq:sf-dist} and the fact
    $v_j(t) \geq v_{j-1}(t)$ for all $t$,
    \begin{equation*}
      (v_j(t))^2 \leq (v_j(0))^2 - (v_{j-1}(0))^2 + (v_{j-1}(t))^2 ,
    \end{equation*}
    from which claim (b) follows.

    Since $v_{j-1} \leq v_j$ for all $t \geq 0$, the inter-vehicular
    distance $(x_{j-1} - x_j)$ is monotonically
    non-increasing. Further, since $\sigma_j = \sigma_j(0)$, we see
    from~\eqref{eq:sigma} and~\eqref{eq:sf-dist} that
    $(x_{j-1} - x_j)$ is uniformly lower bounded by $\sigma_j(0) L$.
    Thus,
    \begin{equation*}
      x_{j-1}(t) - x_j(t) = - \int_0^t e_j(s) \mathrm{d}s + x_{j-1}(0)
      - x_j(0)
    \end{equation*}
    must asymptotically converge to a finite constant. Now, notice
    that $| \dot{e}_j |$ is uniformly upper bounded due to the bounds
    on $u_{j-1}$ and $u_j$. Hence, $e_j$ is uniformly
    continuous. Then, claim~(c) follows from Barbalat's Lemma,
    cf.~\cite{HKK:02}.

  Finally, we know that $\sigma_j(t) = \sigma_j(0) \leq \sigma_0$ for
  all $t$. Further, as $\vv_j$ approaches $\vv_{j-1}$, the
  safe-following distance $\Dc(\vv_{j-1}(t), \vv_j(t))$ approaches $L$
  (cf.~\eqref{eq:sf-dist}). Then, claim (d) follows from the
  definition of the safety ratio~\eqref{eq:sigma}.
\end{proof}

As a consequence of Proposition~\ref{prop:string-stab}, any string
with a finite number of vehicles can be stabilized using the
controller $\gus$ if each pair of consecutive vehicles is initially in
the coupling set and if the velocity of the first vehicle in the
string is uniformly upper bounded in time.

The next result states that if at any time instant the optimal
controller does not exist (because the vehicle has been slowed down by
preceding vehicles), then a vehicle not in the coupling set moves at
the maximum speed.

\begin{lemma}\longthmtitle{If the uncoupled optimal controller does not
    exist then the vehicle exits the coupling set at maximum
    speed}\label{lem:not-in-coupling-set} 
  Let $t_1$ be any time such that $\zeta_j(t_1) \in \Cc_s$ and
  $\zeta_j(t) \notin \Cc_s$ for $t \in (t_1, t_1 + \delta)$ for some
  $\delta > 0$. If $\nexists \Fc_j$ at time~$t_1$, then $\vv_j(t) =
  v^M$ for all $t \in [t_1, t_1 + \delta)$.
\end{lemma}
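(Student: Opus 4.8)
The plan is to argue in two stages: first that $\vv_j(t_1) = v^M$, and then that this persists on all of $[t_1, t_1+\delta)$. The starting observation is that, since $\nexists \Fc_j$ at $t_1$, the extended uncoupled controller satisfies $\guc(\tau_j, t_1, \xv_j, \vv_j) = u_M$. Because $\zeta_j(t_1) \in \Cc_s$, Lemma~\ref{lem:coupling-set}(a) gives $\gus(\zeta_j, \uv_{j-1}) \leq u_M$, so from~\eqref{eq:sf-control} the safe-following control reduces to $\gsf = \min\{u_M, \gus\} = \gus$. Thus, while the vehicle remains in the coupling set, it is governed by $\gus$ (possibly saturated when $\vv_j = v^M$).

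First I would show $\vv_j(t_1) = v^M$ by contradiction. Suppose $\vv_j(t_1) < v^M$; by continuity $\vv_j < v^M$ on some $[t_1, t_1+\eta)$. On this interval, as long as $\zeta_j \in \Cc_s$, the control law~\eqref{eq:vehicle-control} applies $\gsf = \gus$, so Lemma~\ref{lem:coupling-set}(b) yields $\dot{\sigma}_j = 0$, i.e. $\sigma_j$ is frozen at $\sigma_j(t_1) \in [1, \sigma_0]$. Moreover, writing $e_j = \vv_j - \vv_{j-1}$ and using~\eqref{eq:ejdot}, the relative velocity obeys a scalar linear homogeneous ODE $\dot{e}_j = -c(t)\, e_j$ (with the degenerate case $\vv_j = 0$ forcing $\vv_{j-1} = 0$ and $e_j = 0$), so $e_j$ cannot change sign and the condition $\vv_j \geq \vv_{j-1}$ is preserved. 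Hence $\Cc_s$, intersected with $\{\vv_j < v^M\}$, is forward invariant, and $\zeta_j(t) \in \Cc_s$ on $[t_1, t_1+\eta)$, contradicting the hypothesis that $\zeta_j(t) \notin \Cc_s$ for $t \in (t_1, t_1+\delta)$. Therefore $\vv_j(t_1) = v^M$.

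Next I would propagate $\vv_j = v^M$ forward. The key auxiliary fact is that $\nexists \Fc_j$ persists on $[t_1, t_1+\delta)$: the projected earliest approach time $f(t) = t + \Tc(-\xv_j(t), \vv_j(t))$, with $\Tc$ as in~\eqref{eq:Tc}, is non-decreasing, since under any admissible control $\uv_j \leq u_M$ the remaining minimum travel time decreases no faster than real time. As $\exists \Fc_j$ is equivalent to $f(t) \leq \tau_j$ (feasibility of the prescribed approach time from the current state, guaranteed under the standing assumptions via Lemma~\ref{lem:initcon-assump}), the condition $f(t_1) > \tau_j$ implies $f(t) > \tau_j$, hence $\nexists \Fc_j$, for all $t \geq t_1$. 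Consequently $\guc = u_M$ throughout, and whenever $\vv_j = v^M$ and $\zeta_j \notin \Cc_s$ the applied control is $[\guc]_{u_m}^0 = [u_M]_{u_m}^0 = 0$. A maximal-interval argument then closes the proof: let $t^*$ be the supremum of times up to which $\vv_j \equiv v^M$; if $t^* < t_1 + \delta$, then $\vv_j(t^*) = v^M$ by continuity and $\uv_j = 0$ there, so $\vv_j$ cannot drop below $v^M$, contradicting maximality. Thus $\vv_j(t) = v^M$ for all $t \in [t_1, t_1+\delta)$.

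The main obstacle is the persistence of $\nexists \Fc_j$, equivalently the monotonicity of $f$: establishing it cleanly relies on the dynamic-programming observation that the minimum-time-to-go value $\Tc$ decreases along any trajectory at rate at least $-1$, with equality only under maximal acceleration. A secondary technical point is making the forward-invariance argument of the first stage rigorous despite the discontinuity of the closed-loop vector field across the boundary of $\Cc_s$; this is handled by the sign-preservation of the linear $e_j$-dynamics together with $\dot{\sigma}_j = 0$, which jointly keep $\zeta_j$ inside $\Cc_s$ for as long as $\vv_j < v^M$.
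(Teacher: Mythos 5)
Your core argument matches the paper's: you use Lemma~\ref{lem:coupling-set}(b)--(c) to show that, with $\guc = u_M$ forcing $\gsf = \gus$, the only way $\zeta_j$ can exit $\Cc_s$ is for $\sigma_j$ to cross $\sigma_0$, which by claim~(b) can only happen when $\vv_j = v^M$ and the saturation $[\gus]_{u_m}^0 = 0 < \gus$ is active; you then hold $\vv_j$ at $v^M$ via the second case of~\eqref{eq:vehicle-control}. Your explicit maximal-interval argument for the propagation step is a welcome addition that the paper leaves implicit.

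The gap is in what you correctly identify as the key auxiliary fact, the persistence of $\nexists \Fc_j$. You assert that $\exists \Fc_j$ at time $t$ is \emph{equivalent} to $f(t) = t + \Tc(-\xv_j(t),\vv_j(t)) \le \tau_j$, citing Lemma~\ref{lem:initcon-assump}. But that lemma characterizes the feasible approach times only from the \emph{initial} state, under the hypothesis~\eqref{eq:suff-cond} on $\xv_j(0)$; at a later time $t_1$ the vehicle may be too close to the origin and moving too fast for the analogous conclusion to hold, so $f(t_1)\le\tau_j$ does \emph{not} imply the existence of a control arriving at exactly $\tau_j$ with $\vv_j(\tau_j) \ge \nunom$. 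Only the implication $\exists\Fc_j \Rightarrow f(t)\le\tau_j$ is immediate; the converse, which is precisely the direction you need in order to conclude $f(t_1) > \tau_j$ from $\nexists\Fc_j$ at $t_1$, is unproven and false in general. The paper's route (given in the proof of Proposition~\ref{prop:int-app-times}) avoids this entirely: if $\exists\Fc_j$ held at some $t_2 > t_1$, concatenating the actual trajectory on $[t_1,t_2]$ with the feasible control from $t_2$ would yield a feasible control from $t_1$, and Remark~\ref{rem:optim-pwc} would then give $\exists\Fc_j$ at $t_1$, a contradiction. Replacing your equivalence with this concatenation argument closes the gap. Note also that persistence is already needed in your first stage (to guarantee $\gsf=\gus$ on all of $[t_1,t_1+\eta)$ rather than just at $t_1$), so it should be established before both stages rather than inside the second.
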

\begin{proof}
  Under the hypotheses of the result, and as a consequence of
  Lemma~\ref{lem:coupling-set}(c), the only way $\vv_j(t_1) =
  \vv_{j-1}(t_1)$ is possible is if $\gsf = \guc < u^M$ at $t_1$,
  i.e., $\exists \Fc_j$. However, by assumption $\nexists \Fc_j$ at
  time $t_1$, meaning $\guc = u^M$. Thus, it must be that $\vv_j(t_1)
  > \vv_{j-1}(t_1)$. By definition of $t_1$, we then conclude that
  $\sigma_j(t_1) = \sigma_0$. Next, at $t_1$, since $\nexists \Fc_j$
  it means $\guc = u_M$ and thus $\gsf = \gus$. Then, from
  Lemma~\ref{lem:coupling-set}(b), we see that $\vv_j(t_1) < v^M$ is
  not possible and that in fact $\vv_j(t_1) = v^M$ and $\gsf = \gus >
  [\gsf]_{u_m}^0 = 0$. During the interval $(t_1, t_1 + \delta)$, we
  see from the second case of~\eqref{eq:vehicle-control} that $\uv_j =
  [ \guc ]_{u_m}^0 = [ u_M ]_{u_m}^0 = 0$, which proves the result.
\end{proof}

This result is useful in our forthcoming analysis to bound the arrival
times of consecutive vehicles to the target region.

\subsection{Guarantees on vehicle approach times to target
  region}\label{sec:app=times}

In this section we provide guarantees on the vehicle approach times to
the target region under the \localvehcontrol controller.  Our main
result states that the prescribed approach time of a vehicle can be
met provided it is sufficiently far from the actual approach time of
the previous vehicle in the string. If this is not the case, then the
result provides an upper bound on the difference between the actual
approach times.

To precisely quantify the upper bound, we introduce below the
quantity~$ \Tiat$. To justify its definition, we first need to
introduce some useful concepts. Let
$\Dcnom \triangleq \Dc(\nunom, v^M)$, which has the interpretation of
a safe inter-vehicle distance given a vehicle is traveling at the
maximum allowed speed~$v^M$ and the vehicle preceding it is traveling
at a speed greater than or equal to~$\nunom$. Recall that we require
that each vehicle maintain a velocity of at least $\nunom$ as it
approaches the target region and subsequent to it.  Given the
monotonicity properties of the safe-following distance function $\Dc$
defined in~\eqref{eq:sf-dist}, we see that $\Dcnom$ is an upper bound
on the safe-following distance for any pair of consecutive vehicles
$j-1$ and $j$ for all time subsequent to the approach time of vehicle
$j-1$, i.e., for all $t \geq \Ta_{j-1}$. Thus, if the following
vehicle $j$ is within the coupling set with vehicle $j-1$ at the time
of its approach, $\Ta_j$, then we show in the proof of the next result
that the inter-approach time $\Ta_j - \Ta_{j-1}$ is upper bounded by
$\sigma_0 \Tnom$, where
\begin{align}\label{eq:Tnom}
  \Tnom \triangleq \Dcnom / \nunom ,
\end{align}
which we call the \emph{nominal safe inter-vehicle approach time}.

If, instead, vehicles $j-1$ and $j$ do not belong to the coupling set
at $\Ta_j$, and $\Ta_j > \tau_j$, then from
Lemma~\ref{lem:not-in-coupling-set} we know that if $t_e$ is the
moment when $\nexists \Fc_j$ and vehicle $j$ exits (never to enter
again) the coupling set with vehicle $j-1$, then $\vv_j = v^M$ for all
$t \in [t_e, \Ta_j]$. Note also that, by definition,
$\sigma_j(t_e) = \sigma_0$. Thus, letting $\xv_{j-1}(t_e) = - d$ and
$\vv_{j-1}(t_e) = v$, we see from~\eqref{eq:sigma} that
$\xv_j(t_e) = - (d + \sigma_0 \Dc(v, v^M))$. Hence,
\begin{equation*}
  \Ta_j = \frac{ d + \sigma_0 \Dc(v, v^M) }{ v^M } ,
\end{equation*}
and as a result $\Ta_j - \Ta_{j-1} \leq \Lc(d,v)$, with
\begin{equation}\label{eq:Lc}
  \Lc(d,v) \triangleq \frac{ d + \sigma_0 \Dc(v, v^M) }{ v^M } -
  \Tc(d,v) ,
\end{equation}
where $\Tc$ is as defined in~\eqref{eq:Tc} and gives the earliest
possible approach time given the distance to go and the current
velocity. With this discussion in place, we are ready to define
\begin{equation}\label{eq:Tiat}
  \Tiat \triangleq \max \{ \sigma_0 \Tnom, \max_{ \substack{d \geq
      \frac{ (\nunom)^2 - v^2 }{ 2u_M }, \\ v \in [\ulinevv, \nunom]} }
  \Lc(d,v) \} ,
\end{equation}
where $\ulinevv$ is given in Lemma~\ref{lem:coupling-set}(d).  This
time also plays a key role in uniformly upper bounding (independently
of the initial conditions) the difference between the actual approach
times of consecutive vehicles if they are not in the coupling set when
reaching the target region. The constraints on $d$ and $v$
in~\eqref{eq:Tiat} essentially constitute, as shown in the proof of
the next result, a sufficient condition for the occurrence of the case
in which the vehicles $j-1$ and $j$ do not belong to the coupling set
at $\Ta_j$, and $\Ta_j > \tau_j$.

We are now ready to state formally the first result of this section.

\begin{proposition}\longthmtitle{Inter-approach times of vehicles at
    the target region}\label{prop:int-app-times}
  For any vehicle $j \in \{ 2, \ldots, N \}$, suppose
  that~\eqref{eq:suff-cond} holds, $\tau_j \in [\taue_j, \taul_j]$,
  and $\vv_{j-1}(\Ta_{j-1}) \geq \nunom$. Then, $\vv_j(\Ta_j) \geq
  \nunom$ and
  \begin{enumerate}
  \item if $\tau_j - \Ta_{j-1} \leq \Tiat$, then $ \Ta_j - \Ta_{j-1}
    \leq \Tiat$,
  \item if $\tau_j - \Ta_{j-1} \geq \Tiat$, then $\Ta_j = \tau_j$.
  \end{enumerate}
\end{proposition}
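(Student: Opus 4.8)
The plan is to reduce both claims to a single dichotomy: either $\Ta_j = \tau_j$, or else $\Ta_j > \tau_j$ and $\Ta_j - \Ta_{j-1} \le \Tiat$. Since the uncoupled controller never attempts to reach the target before $\tau_j$, one always has $\Ta_j \ge \tau_j$; granting the dichotomy, claim~(a) is immediate (if $\Ta_j=\tau_j$ then $\Ta_j - \Ta_{j-1} = \tau_j - \Ta_{j-1} \le \Tiat$, and otherwise $\Ta_j - \Ta_{j-1}\le\Tiat$ directly), while claim~(b) follows by contradiction: if $\tau_j - \Ta_{j-1}\ge\Tiat$ yet $\Ta_j>\tau_j$, then $\Ta_j - \Ta_{j-1} \le \Tiat \le \tau_j - \Ta_{j-1}$ forces $\Ta_j \le \tau_j$, a contradiction. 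Thus the work concentrates on the dichotomy and on $\vv_j(\Ta_j)\ge\nunom$.

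I would first record the auxiliary fact that every vehicle keeps velocity at least $\nunom$ from its approach time onward, a string-wide invariant proved by induction on the index: after a vehicle crosses the target $\nexists\Fc$, so $\guc=u_M$, whence its velocity is non-decreasing whenever it is out of the coupling set, and whenever it is in the coupling set $\vv$ is at least the velocity of its predecessor, which by the inductive hypothesis is already $\ge\nunom$ (the base case being the first vehicle, which never couples and only accelerates after approach). In particular $\vv_{j-1}(t)\ge\nunom$ for $t\in[\Ta_{j-1},\Ta_j]$. The velocity claim then follows by cases at $\Ta_j$: if vehicle $j$ reaches the target under a feasible $\guc$, then $\vv_j(\Ta_j)=\vv_j(\tau_j)\ge\nunom$ by design of $\guc$; if it exits the coupling set at maximum speed, Lemma~\ref{lem:not-in-coupling-set} gives $\vv_j(\Ta_j)=v^M\ge\nunom$; and if $\zeta_j(\Ta_j)\in\Cc_s$, then $\vv_j(\Ta_j)\ge\vv_{j-1}(\Ta_j)\ge\nunom$.

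For the dichotomy I would split on whether $\zeta_j(\Ta_j)\in\Cc_s$. If it is, then using $\xv_j(\Ta_j)=0$, $\sigma_j(\Ta_j)\le\sigma_0$, and the monotonicity of $\Dc$ in~\eqref{eq:sf-dist} (so that $\Dc(\vv_{j-1}(\Ta_j),\vv_j(\Ta_j))\le\Dc(\nunom,v^M)=\Dcnom$), I bound the predecessor's lead by $\xv_{j-1}(\Ta_j)=\sigma_j(\Ta_j)\Dc(\cdot,\cdot)\le\sigma_0\Dcnom$. Since $\vv_{j-1}\ge\nunom$ on $[\Ta_{j-1},\Ta_j]$ while $j-1$ travels from the origin to $\xv_{j-1}(\Ta_j)$, integrating the velocity yields $\nunom(\Ta_j-\Ta_{j-1})\le\sigma_0\Dcnom$, i.e. $\Ta_j-\Ta_{j-1}\le\sigma_0\Dcnom/\nunom=\sigma_0\Tnom\le\Tiat$. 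If instead $\zeta_j(\Ta_j)\notin\Cc_s$ and $\Ta_j>\tau_j$, let $t_e$ be the last instant at which $j$ leaves the coupling set; by Lemma~\ref{lem:not-in-coupling-set} and the reasoning preceding the statement, $\nexists\Fc_j$ at $t_e$, $\sigma_j(t_e)=\sigma_0$, and $\vv_j=v^M$ on $[t_e,\Ta_j]$. Writing $d=-\xv_{j-1}(t_e)$ and $v=\vv_{j-1}(t_e)$, the constant-speed travel of $j$ gives $\Ta_j=t_e+(d+\sigma_0\Dc(v,v^M))/v^M$, while $\Ta_{j-1}\ge t_e+\Tc(d,v)$ because $\Tc$ in~\eqref{eq:Tc} is the least time in which $j-1$ can cover the remaining distance $d$; subtracting, the common origin $t_e$ cancels and $\Ta_j-\Ta_{j-1}\le\Lc(d,v)$ with $\Lc$ as in~\eqref{eq:Lc}.

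The crux is then to show $\Lc(d,v)\le\Tiat$, for which I would verify that $(d,v)$ falls in the regime captured by~\eqref{eq:Tiat}. The lower bound $v\ge\ulinevv$ comes from Lemma~\ref{lem:coupling-set}(d): at $t_e$ vehicle $j$ is at $v^M$ with $\gus=\gsf>0$, exactly the situation in which that claim forces $\vv_{j-1}\ge\ulinevv$. The distance bound $d\ge(\nunom^2-v^2)/(2u_M)$ follows from $\vv_{j-1}(\Ta_{j-1})\ge\nunom$, since the target velocity of $j-1$ cannot exceed the maximum-acceleration value $\sqrt{v^2+2u_M d}$. If $v\le\nunom$, these two bounds place $(d,v)$ in the feasible set of the inner maximization in~\eqref{eq:Tiat}, so $\Lc(d,v)\le\Tiat$ directly. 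I expect the main obstacle to be the complementary case $v>\nunom$, which I would handle through monotonicity: $\Lc(\cdot,v)$ is non-increasing in $d$ (in~\eqref{eq:Tc} the $d/v^M$ growth of $\Tc$ matches that of the first term of~\eqref{eq:Lc}), hence $\Lc(d,v)\le\lim_{d'\downarrow0}\Lc(d',v)=\sigma_0\Dc(v,v^M)/v^M$, and for $v\ge\nunom$ this is at most $\sigma_0\Dcnom/v^M\le\sigma_0\Tnom$. Either way $\Ta_j-\Ta_{j-1}\le\Tiat$. A final check is that $d>0$ (equivalently $t_e<\Ta_{j-1}$) so the $\Tc$-based estimate applies; configurations in which $j-1$ has already crossed the target at $t_e$ fall under the maintained-velocity regime $v\ge\nunom$ and are again absorbed by the $\sigma_0\Tnom$ bound.
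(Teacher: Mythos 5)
Your proof is correct and follows essentially the same route as the paper's: the same case split on whether the uncoupled controller remains feasible and whether $\zeta_j(\Ta_j)\in\Cc_s$, the same virtual-particle bound $\sigma_0\Dcnom/\nunom=\sigma_0\Tnom$ in the coupled case, and the same $\Lc(d,v)$ estimate with the constraints $v\ge\ulinevv$ and $d\ge((\nunom)^2-v^2)/(2u_M)$ in the uncoupled case. The only differences are organizational: you package the argument as a dichotomy so that claim (b) follows by arithmetic rather than by the paper's appeal to the feasibility monotonicity of Lemma~\ref{lem:initcon-assump}, and you spell out the induction behind $\vv_{j-1}(t)\ge\nunom$ for $t\ge\Ta_{j-1}$, which the paper uses without explicit proof.
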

\begin{proof}
  First note that initially at $t = 0$, Lemma~\ref{lem:initcon-assump}
  guarantees that $\exists \Fc_j$. Next, notice from the definition of
  the controller~\eqref{eq:vehicle-control} that $\uv_{j}(t) \leq
  \guc$ for all $t \geq 0$. Further notice that if at some time $t_1$,
  $\nexists \Fc_{j}$, then it remains $\nexists \Fc_{j}$ for all $t
  \geq t_1$ for otherwise it means there exists some control policy
  starting from $t = t_1$ such that $\Ta_{j} = \tau_{j}$ and
  $\vv_{j}(\Ta_{j}) \geq \nunom$ and Remark~\ref{rem:optim-pwc}
  guarantees $\exists \Fc_{j}$ at $t = t_1$. From this discussion, we
  deduce that $\Ta_{j} \geq \tau_{j}$ for each vehicle $j$.

  (a) There are two cases - either the uncoupled optimal controller
  exists until the vehicle reaches the target region or it becomes
  infeasible earlier. We consider each of these cases separately. In
  the first case, notice that for any vehicle $j \in \{ 2, \ldots, N
  \}$, if $\exists \Fc_{j}$ at $t = \Ta_{j}$, then it follows from the
  definition of $\Ta_{j}$ that $\Ta_{j} = \tau_{j}$ and $\vv_{j}(
  \Ta_{j} ) \geq \nunom$, which means claim (a) is true in the first
  case.

  Next, we consider the case when $\nexists \Fc_{j}$ first occurs at
  some time $t_f < \Ta_{j}$. Clearly, $\zeta_j (t_f) \in \Cc_s$. Now,
  there are two sub-cases - either $\zeta_j(\Ta_j) \in \Cc_s$ or
  $\zeta_j(\Ta_j) \notin \Cc_s$. In the first sub-case, we have by
  definition that $\sigma_j(\Ta_j) \leq \sigma_0$ and
  $\vv_j(\Ta_j) \geq \vv_{j-1}(\Ta_j)$. Then, the fact that
  $\vv_{j-1}(t) \geq \nunom$ for all $t \geq \Ta_{j-1}$ implies
  \begin{align*}
    \xv_{j-1}(\Ta_j) - \xv_j(\Ta_j) &= \sigma_j(\Ta_j) \cdot \Dc(
    \vv_{j-1}(\Ta_j), \vv_j(\Ta_j) )
    \\
    &\leq \sigma_0 \cdot \Dcnom ,
  \end{align*}
  where we have used the definition of $\Dcnom$ and the monotonicity
  properties of the safe-following distance function $\Dc$ in deriving
  the inequality. Now, imagine a virtual particle rigidly fixed to
  vehicle $j-1$ at a distance of $\sigma_0 \Dcnom$ behind it. Since
  $\vv_{j-1}(t) \geq \nunom$ for all $t \geq \Ta_{j-1}$, we can then
  conclude that
  $\Ta_{j} - \Ta_{j-1} \leq \sigma_0 \frac{ \Dcnom }{ \nunom } =
  \sigma_0 \Tnom \le \Tiat$.

  We are then left with the sub-case when
  $\zeta_j(\Ta_j) \notin \Cc_s$.  Thus, suppose that there exists
  $t_e \geq t_f$ such that $\zeta_j(t) \notin \Cc_s$ for all
  $t \in (t_e, \Ta_j]$ and $\zeta_j(t_e) \in \Cc_s$. From
  Lemma~\ref{lem:not-in-coupling-set}, it follows that
  $\vv_j(t) = v^M$ for all $t \in [t_e, \Ta_j]$. Thus, as we have seen
  in~\eqref{eq:Lc}, $\Ta_j - \Ta_{j-1} \leq \Lc(d,v)$ with
  $\xv_{j-1}(t_e) = - d$ and $\vv_{j-1}(t_e) = v$. Thus, now it
  remains to justify the constraints on $d$ and $v$
  in~\eqref{eq:Tiat}. Given the assumption that
  $\vv_{j-1}(\Ta_{j-1}) \geq \nunom$ it follows that
  $d \geq \frac{ (\nunom)^2 - v^2 }{ 2u_M }$, which is the minimum
  distance traversed as the velocity of a vehicle evolves from $v$ to
  $\nunom$. Next, by the definition of $t_e$, note that
  $\sigma_j(t_e) = \sigma_0$ and $\sigma_j(t) > \sigma_0$ for all
  $t \in (t_e, \Ta_j]$, implying that $\dot \sigma_j(t_e)>0$. From
  Lemma~\ref{lem:coupling-set}(b)-(d), we then deduce that
  $\vv_{j-1}(t_e) \geq \ulinevv$. Finally, notice that
  \begin{align*}
    \xv_{j-1}(\Ta_{j}) - \xv_j(\Ta_{j}) & \leq \xv_{j-1}(t_e) -
    \xv_j(t_e)
    \\
    & = \sigma_0 \Dc( \vv_{j-1}(t_e), \vv_j(t_e) ) ,
  \end{align*}
  where the inequality follows from $\vv_{j-1}(t) \leq \vv_j(t) = v^M$
  for all $t \in [t_e, \Ta_j]$. Consequently, if $\vv_{j-1}(t_e) = v
  \geq \nunom$, then $\Dc( v, v^M ) \leq \Dc( \nunom, v^M )$ and hence
  we deduce $\Ta_{j} - \Ta_{j-1} \leq \sigma_0 \Tnom$, which justifies
  the final constraint in~\eqref{eq:Tiat} and hence proves claim (a).
 
  (b) The main argument for the proof of this claim is that the
  uncoupled optimal controller exists until the vehicle reaches the
  target region, which we show by contradiction. Suppose that
  $\nexists F_j$ at $\Ta_j$. Then as in the proof of claim (a), we see
  that $\Ta_j - \Ta_{j-1} \leq \Tiat$. However,
  Lemma~\ref{lem:initcon-assump} guarantees that if $\Ta_j = \tau_a $
  is feasible then so is $\Ta_j = \tau_b$ for any
  $\tau_b \geq \tau_a$.  Using this for the case $\tau_a = \Ta_{j}$
  and $\tau_b = \tau_j$, we would deduce that $\Ta_j = \tau_j$ is
  feasible, which is a contradiction.  The rest of the proof is the
  same as in the first case of the proof of~(a).
\end{proof}

Note that in~\eqref{eq:Tiat}, $\Tiat$ is defined as the solution of a
maximization problem. However, since the maximization problem involves
only the parameters of the system, it could be solved offline. In
fact, we can give an analytical expression for $\Tiat$, which we
present in the next result.

\begin{corollary}\longthmtitle{Analytical expression for $\Tiat$}
  \begin{equation}\label{eq:Tiat-exp}
    \Tiat =  \begin{cases}
      \sigma_0 \Tnom, &\text{if } \ulinevv > \nunom ,
      \\
      \max\{ \sigma_0 \Tnom, \Tfol(\ulinevv) \}, &\text{if } \ulinevv
      \leq \nunom ,
    \end{cases}
  \end{equation}
  where
  \begin{align*}
    \Tfol(v) & \triangleq \frac{ (\nunom)^2 - v^2 }{ 2 u_M v^M } +
    \frac{ \sigma_0 \Dc( v, v^M ) }{ v^M } + \frac{ \nunom - v }{ u_M
    } .
  \end{align*}
\end{corollary}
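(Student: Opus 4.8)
The plan is to evaluate the nested maximization in~\eqref{eq:Tiat} in closed form, splitting according to whether the feasible interval $[\ulinevv, \nunom]$ for $v$ is empty; this produces the two branches of~\eqref{eq:Tiat-exp}. If $\ulinevv > \nunom$, this interval is empty, so the inner maximum over $(d,v)$ is taken over an empty set and disappears, leaving $\Tiat = \sigma_0 \Tnom$. The remainder of the argument concerns the case $\ulinevv \le \nunom$.

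For $\ulinevv \le \nunom$ I would maximize $\Lc(d,v)$ in two stages, first over $d$ and then over $v$. Fixing $v$, differentiating~\eqref{eq:Lc} gives $\partial_d \Lc = 1/v^M - \partial_d \Tc(d,v)$, and from~\eqref{eq:Tc} one has $\partial_d \Tc(d,v) = 1/\sqrt{2u_M d + v^2} \ge 1/v^M$ in the first branch (where $\sqrt{2u_M d + v^2} \le v^M$) and $\partial_d \Tc(d,v) = 1/v^M$ in the second. Hence $\Lc(\cdot,v)$ is non-increasing in $d$ on the whole feasible range $d \ge \frac{(\nunom)^2 - v^2}{2u_M}$, so its maximum is attained at the left endpoint $d_{\min}(v) \triangleq \frac{(\nunom)^2 - v^2}{2u_M}$. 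Since $2u_M d_{\min}(v) + v^2 = (\nunom)^2 \le (v^M)^2$, the first branch of~\eqref{eq:Tc} applies at $d_{\min}(v)$ and yields $\Tc(d_{\min}(v),v) = (\nunom - v)/u_M$; substituting into~\eqref{eq:Lc} and simplifying gives an expression that reduces to $\Tfol(v)$.

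It then remains to maximize $\Tfol(v)$ over $v \in [\ulinevv, \nunom]$. Using $\tfrac{d}{dv}\Dc(v,v^M) = v/u_m$ (valid from~\eqref{eq:sf-dist} since $v \le v^M$), I would compute $\Tfol'(v)$ and show that the inequality $\Tfol'(v) \le 0$ reduces, after clearing the positive factors $u_M$, $-u_m$ and $v^M$, exactly to $v(-u_m + \sigma_0 u_M) \ge -u_m v^M$, i.e.\ to $v \ge \ulinevv$ with $\ulinevv$ as in Lemma~\ref{lem:coupling-set}(d). Thus $\Tfol$ is non-increasing on $[\ulinevv, \nunom]$ and its maximum there is the endpoint value $\Tfol(\ulinevv)$. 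Combining the two stages, the inner maximum in~\eqref{eq:Tiat} equals $\Tfol(\ulinevv)$, so $\Tiat = \max\{\sigma_0 \Tnom, \Tfol(\ulinevv)\}$, which is the second branch of~\eqref{eq:Tiat-exp}.

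The main obstacle is this final monotonicity step: verifying that the sign change of $\Tfol'$ occurs \emph{exactly} at $\ulinevv$, so that the endpoint $v = \ulinevv$ is the maximizer over the entire interval. This is the step that ties the optimization back to the threshold velocity of Lemma~\ref{lem:coupling-set}(d), and it is what makes the clean closed form possible; by contrast, the monotonicity in $d$ and the evaluation of $\Tc$ at $d_{\min}(v)$ are routine by comparison.
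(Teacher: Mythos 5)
Your proposal follows the paper's proof step for step: the empty-interval dichotomy on $\ulinevv$ versus $\nunom$, the monotonicity of $\Lc(\cdot,v)$ in $d$ (strictly decreasing on the first branch of $\Tc$, constant on the second) forcing the maximizer to the left endpoint $d_{\min}(v)=((\nunom)^2-v^2)/(2u_M)$, and then monotonicity of the resulting one-variable function over $[\ulinevv,\nunom]$. Your explicit computation of $\Tfol'$ is more detail than the paper gives; the paper simply asserts that $\Tfol$ is decreasing.

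There is, however, one discrepancy you need to resolve. Substituting $d_{\min}(v)$ into $\Lc$ makes $\Tc(d_{\min}(v),v)=(\nunom-v)/u_M$ enter with a \emph{minus} sign, i.e.
\begin{equation*}
  \Lc(d_{\min}(v),v) = \frac{(\nunom)^2-v^2}{2u_Mv^M} +
  \frac{\sigma_0\Dc(v,v^M)}{v^M} - \frac{\nunom-v}{u_M},
\end{equation*}
which differs from the $\Tfol$ printed in the statement (whose last term carries a plus sign) by $2(\nunom-v)/u_M$. So your assertion that the substitution ``reduces to $\Tfol(v)$'' is not literally true for the stated $\Tfol$. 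Tellingly, your second key claim --- that $\Tfol'(v)\le 0$ is equivalent to $v(-u_m+\sigma_0u_M)\ge -u_mv^M$, i.e.\ $v\ge\ulinevv$ --- is also correct only for the minus-sign version; for the printed plus-sign version all three terms of $\Tfol'$ are nonpositive (recall $u_m<0$), so that $\Tfol$ is decreasing on all of $[0,v^M]$ and there is no sign change at $\ulinevv$. Your two computations are thus mutually consistent and consistent with the quantity actually being maximized, but not with the formula as stated; this points to a sign error in the corollary's definition of $\Tfol$ that the paper's own proof also glosses over. The conclusion survives under either convention, since in both cases $\Tfol$ is non-increasing on $[\ulinevv,\nunom]$ and the maximizer is the left endpoint $v=\ulinevv$; but for the same reason you should not present the exact coincidence of the critical point with $\ulinevv$ as the crux of the argument --- the endpoint is the maximizer simply because it is the left end of the constraint interval inherited from Lemma~\ref{lem:coupling-set}(d), not because $\Tfol$ peaks there.
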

\begin{proof}
  The case of $\ulinevv > \nunom$ follows directly from the fact that
  maximization $\Lc$ in~\eqref{eq:Tiat} is infeasible. Thus, now we
  assume $\ulinevv \leq \nunom$. By direct computation, we see that
  \begin{align*}
    \frac{ \partial \Lc(d,v) }{ \partial d }
    \begin{cases}
      < 0, \ & 2 u_M d < (v^M)^2 - v^2 \\
      = 0, \ & 2u_M d \geq (v^M)^2 - v^2 .
    \end{cases}
  \end{align*}
  Thus, it follows that
  \begin{align*}
    \max_{ \substack{d \geq \frac{ (\nunom)^2 - v^2 }{ 2u_M }, \\ v
        \in [\ulinevv, \nunom]} } \Lc(d,v) &= \!\! \max_{ \substack{d
        = \frac{ (\nunom)^2 - v^2 }{ 2u_M }, \\ v \in [\ulinevv,
        \nunom]} } \Lc(d,v) = \!\! \max_{ v \in [\ulinevv, \nunom]}
    \Tfol(v) \\
    &= \Tfol(\ulinevv) ,
  \end{align*}
  where the final equality follows from the fact that $\Tfol$ is a
  decreasing function of $v$.
\end{proof}

The next result summarizes the guarantees provided by the
\localvehcontrol controller~\eqref{eq:vehicle-control} regarding the
satisfaction of the constraints on safety and approach times.

\begin{theorem}\longthmtitle{Provably safe sub-optimal distributed
    control under finite-time constraints}\label{thm:sf-control}
  Consider a string of vehicles $\until{N}$ whose dynamics are
  described by~\eqref{eq:vehicle-dyn} under the \localvehcontrol
  controller~\eqref{eq:vehicle-control}.  Assume that $\xv_1(0) \leq {
    (v^M)^2 }/{ 2u_m } - { (\nunom)^2 }/{ 2u_M }$, that $\tau_j \in
  [\taue_j, \infty)$ and that the vehicles are in a safe configuration
  initially, ($\sigma_j(0) \geq 1$ for all $j \in \{2, \ldots, N\}$).
  Then,
  \begin{enumerate}
  \item inter-vehicle safety is ensured for all vehicles and for all
    time subsequent to $0$ (i.e., $\sigma_j(t) \geq 1$ for all $j \in
    \{2, \ldots, N\}$ and $ t \ge 0$),
  \item the first vehicle approaches the target region at $\tau_1$,
    each vehicle travels with a velocity of at least $\nunom$ at the
    time of approaching the target region and subsequent to it and
  \item for each $j \in \{2, \ldots, N\}$, if $\tau_j - \Ta_{j-1} \leq
    \Tiat$ then $\Ta_j - \Ta_{j-1} \leq \Tiat$. Alternatively, if
    $\tau_j - \Ta_{j-1} \geq \Tiat$, then $\Ta_j = \tau_j$.
  \end{enumerate}
\end{theorem}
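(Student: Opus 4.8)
The plan is to handle the three claims in order, noting at the outset that claim (c) is essentially a restatement of Proposition~\ref{prop:int-app-times}, whose hypothesis $\vv_{j-1}(\Ta_{j-1}) \geq \nunom$ is precisely the content of claim (b) for the preceding vehicle. I would therefore prove claims (b) and (c) together by induction on the vehicle index $j$, while establishing the safety claim (a) separately as an invariance property. A preliminary reduction sets up everything: since the vehicles are indexed by increasing distance to the target, $\xv_j(0) \leq \xv_1(0)$ for all $j \in \ntil{N}$, so the single hypothesis $\xv_1(0) \leq (v^M)^2/2u_m - (\nunom)^2/2u_M$ implies~\eqref{eq:suff-cond} for every vehicle. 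Combined with Lemma~\ref{lem:initcon-assump} this gives $\taul_j = \infty$, so the standing assumption $\tau_j \in [\taue_j, \infty)$ coincides with $\tau_j \in [\taue_j, \taul_j]$ and the hypotheses of Proposition~\ref{prop:int-app-times} are available for each $j$.

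For claim (a) I would show that $\sigma_j \geq 1$ is invariant for each $j \in \ntil{N}$. The key point is that the controller~\eqref{eq:vehicle-control} applies the safe-following input precisely on the coupling set, where $\uv_j \leq \gus(\zeta_j,\uv_{j-1})$ in every branch (using that saturation by $[\cdot]_{u_m}^0$ can only lower the input). Since $\gus$ is the value rendering $\dot\sigma_j = 0$ by Lemma~\ref{lem:coupling-set}(b), and the expression for $\dot\sigma_j$ derived in the proof of that lemma is decreasing in $\uv_j$, any input $\uv_j \leq \gus$ yields $\dot\sigma_j \geq 0$ throughout $\Cc_s$. Outside $\Cc_s$ safety is not threatened: if $\vv_j < \vv_{j-1}$ then $\sigma_j$ is increasing, and if $\sigma_j > \sigma_0$ then $\sigma_j$ exceeds $1$ with a margin. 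I would close by contradiction: if $\sigma_j$ first reached $1$ at some time $t^\star$, it would have to be decreasing there, forcing $\vv_j(t^\star) > \vv_{j-1}(t^\star)$ and $\sigma_j(t^\star) = 1 \in [1,\sigma_0]$, i.e.\ $\zeta_j(t^\star) \in \Cc_s$, contradicting $\dot\sigma_j(t^\star) \geq 0$. With $\sigma_j(0) \geq 1$ this gives $\sigma_j(t) \geq 1$ for all $t$.

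For claims (b) and (c) I would induct on $j$. The base case is vehicle $1$: having no predecessor it never enters a coupling set and always executes $\guc$, whose feasibility at $t=0$ (from Lemma~\ref{lem:initcon-assump} and $\tau_1 \in [\taue_1,\infty)$) is never destroyed by a preceding vehicle, so $\Ta_1 = \tau_1$ with $\vv_1(\tau_1) \geq \nunom$ by construction of $\guc$, and after $\tau_1$ the extended input $u_M$ keeps $\vv_1 \geq \nunom$. For the inductive step, assuming $\vv_{j-1}(t) \geq \nunom$ for all $t \geq \Ta_{j-1}$, I apply Proposition~\ref{prop:int-app-times} to obtain both $\vv_j(\Ta_j) \geq \nunom$ and the inter-approach-time dichotomy of claim (c). To upgrade $\vv_j(\Ta_j) \geq \nunom$ to $\vv_j(t) \geq \nunom$ for all $t \geq \Ta_j$, I would argue that after $\Ta_j$ any deceleration occurs only under $\gsf$, which past the approach equals $\gus$ (as $\guc$ is extended to $u_M$); but on $\Cc_s$ one has $\vv_j \geq \vv_{j-1} \geq \nunom$, and Lemma~\ref{lem:coupling-set}(c) prevents $\vv_j$ from crossing below $\vv_{j-1}$, so $\vv_j$ never drops under $\nunom$.

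The main obstacle I anticipate is the safety invariance in claim (a): the delicate part is the bookkeeping across controller switches at the boundary of $\Cc_s$, ensuring $\dot\sigma_j \geq 0$ holds in every branch of~\eqref{eq:vehicle-control} (including the velocity-saturated branches at $\vv_j = v^M$) so that the trajectory cannot slip below $\sigma_j = 1$ during a transition. The secondary technical point is the post-approach velocity maintenance in claim (b), which hinges on correctly invoking Lemma~\ref{lem:coupling-set}(c) to rule out $\vv_j$ falling below the predecessor's velocity; the remaining pieces follow routinely from the cited results.
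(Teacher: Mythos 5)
Your proposal is correct and follows essentially the same route as the paper's proof: claim (a) as an invariance argument for the safety ratio built on Lemma~\ref{lem:coupling-set}, claim (b) by treating vehicle $1$ directly via feasibility of $\guc$, and claim (c) by induction using Proposition~\ref{prop:int-app-times}. You actually spell out a few steps the paper leaves implicit (that $u_j \leq \gus$ on $\Cc_s$ with monotonicity of $\dot\sigma_j$ in $u_j$, the propagation of the initial-condition hypothesis to all vehicles, and the post-approach maintenance of $v_j \geq \nunom$), all of which are sound.
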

\begin{proof}
  (a) Note that for $\sigma_j \in [1, \sigma_0]$, if $\zeta_j \in
  \Cc_s$, then $\sigma_j$ either stays constant, in the case of
  Lemma~\ref{lem:coupling-set}(c), or increases, in the case of
  Lemma~\ref{lem:coupling-set}(d). If on the other hand $\zeta_j
  \notin \Cc_s$, then it means $\vv_j < \vv_{j-1}$ and $\xv_{j-1} -
  \xv_j$ increases while $\Dc(\vv_{j-1},\vv_j)$ stays constant at $L$
  and thus $\sigma_j$ increases. Thus $\sigma_j(t) \geq 1$ is
  guaranteed for all vehicles $j \in \{2, \ldots, N\}$ and for all $t
  \geq 0$.

  (b) Since there is no vehicle in front of vehicle $1$, $u_1 = \guc$
  for all $t$. Initial feasibility then guarantees that $\Ta_1 =
  \tau_1$ and $\vv_1(\Ta_1) \geq \nunom$.

  Claim (c) follows directly from Proposition~\ref{prop:int-app-times}
  and by using induction.
\end{proof}

Note that we have not guaranteed optimality of our proposed solution
and in general it is only suboptimal. However, the uncoupled optimal
control mode ensures that the overall distributed controller is
optimum seeking for each individual vehicle.

\subsection{Integration with intelligent intersection management}

Here we elaborate on the application to intelligent intersection
traffic management, cf. Remark~\ref{rem:intersection-management}, of
our distributed control design for a string of vehicles under finite
time constraints.  We envision a system where each vehicle or groups
of vehicles communicate their aggregate information to a central
intersection manager. The intersection manager seeks to optimize the
schedule of the usage of the intersection by the vehicles. With the
information received, the manager schedules an intersection occupancy
time interval to each group of vehicles. The vehicles belonging to
each group then apply the \localvehcontrol
controller~\eqref{eq:vehicle-control} in order to satisfy the
prescribed schedule while also maintaining safety.  The aggregate
information required by the central intersection manager from each
group of vehicles has two pieces: constraints on the approach time
$\tau_1$ of the first vehicle in the group and a bound on the
occupancy time $\bartauocc \geq \Tx_N - \tau_1$ of the intersection
that could be guaranteed by the \localvehcontrol
controller~\eqref{eq:vehicle-control}. We discuss next how to compute
each element.

\subsubsection{Constraints on approach time of the first vehicle}
The constraints on $\tau_1$ could be computed by ignoring other
vehicles in the group, as in
Section~\ref{sec:relax-feas-app-times}. However, in doing so, ignoring
the initial conditions of the other vehicles in the group poses the
risk of lengthening the guaranteed upper bound $\bartauocc$ on the
occupancy time. The reasoning for this is better explained in terms of
earliest times of approach at the intersection of the vehicles.  If
$\taue_{j}$ for some $j > 1$ is significantly greater than
$\taue_{1}$, then having the vehicle $1$ slow down to approach the
intersection at a time later than $\taue_{1}$ will allow the string of
vehicles to meet a smaller guaranteed upper bound $\bartauocc_i$ on
the occupancy time.

Given this observation, we propose the following alternative way of
computing the constraints on the approach time of the first vehicle.
Recalling the interpretation of $\Tnom$ as the nominal inter-vehicle
approach time of vehicles in the group, we see that the earliest time
of approach for vehicle $j$ puts a constraint on the earliest time of
approach of the group, i.e., vehicle $1$, to be no less than
$\taue_{j} - (j-1) \Ac \Tnom$, where $\Ac \in [0, 1]$ is a design
parameter that determines the aggressiveness with which the prescribed
approach times for the vehicles are spaced. The smaller the value of
$\Ac$, smaller is the gap between the prescribed approach times and
hence greater is the aggressiveness with which the vehicles are forced
to enter the coupling set and use safe-following control. Hence, we
define the \emph{earliest time $\Te_1$ of approach} for the group of
vehicles as
\begin{equation}\label{eq:taum}
  \Te_1 \triangleq \max \{ \taue_{j} - (j-1) \Ac \Tnom :
  j \in \{1, \ldots, N\} \} .
\end{equation}
Similarly, we can also compute the \emph{latest time of approach}
$\Tl_1$ for the group of vehicles. Note that, under the assumptions
of Lemma~\ref{lem:initcon-assump}, $\Tl_1 = \infty$. Further, for
each vehicle $j \in \{1, \ldots, N\}$ in the group, we have the
intersection manager prescribe
\begin{equation}\label{eq:tauj}
  \tau_{j} \triangleq \tau_1 + (j - 1) \Ac \Tnom ,
\end{equation}
so that the only variable it must compute is~$\tau_1$. Thus, we see
that the parameter $\Ac$ influences the aggressiveness with which the
vehicles are driven into the safe-following mode. For example, $\Ac=0$
means that $\tau_j = \tau_1$ for all $j$, which necessarily means that
the each vehicle must enter the safe-following mode at least once. For
higher values of $\Ac$, there is a greater chance of the uncoupled
controller $\guc$ for vehicle $j$ being feasible until its approach
time $\Ta_j$. Given the constraints that the scheduler takes into
account, we have $\tau_1 \in [ \Te_1, \Tl_1 ]$. This, together
with~\eqref{eq:taum}, implies that $\tau_{j} \in [ \taue_{j},
\taul_{j} ]$, i.e., the sequence $\{\tau_j\}_{j=1}^N$ of approach
times prescribed by the intersection manager is feasible ignoring the
safety constraints.

\subsubsection{Guaranteed bound on occupancy time of intersection}

Given the sequence of approach times prescribed in~\eqref{eq:tauj} by
the intersection manager, the following result builds on
Proposition~\ref{prop:int-app-times} to provide a guaranteed upper
bound on the occupancy time of the target region by the group of
vehicles.

\begin{corollary}\longthmtitle{Guaranteed upper bound on occupancy
    time of the group of vehicles}\label{cor:occ-time}
  For the string of vehicles $\until{N}$, suppose $\tau_1 \geq
  \taue_1$, where $\taue_1$ is given by~\eqref{eq:taum}, and $\tau_j$
  for $j \in \ntil{N}$ satisfies~\eqref{eq:tauj}. Then, the occupancy
  time~$\tauocc \triangleq \Tx_N - \Ta_1$ is upper bounded as
  $\tauocc \le \bartauocc$, where
  \begin{align}\label{eq:bartauocc}
    \bartauocc = (N - 1) \Tiat + \max \left\{ \frac{ L + \Delta }{
        \nunom } , \Tiat \right\} .
  \end{align}
\end{corollary}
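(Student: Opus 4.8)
The plan is to split the occupancy time at the instant the last vehicle reaches the target region, writing
\begin{equation*}
  \tauocc = \Tx_N - \Ta_1 = (\Ta_N - \Ta_1) + (\Tx_N - \Ta_N),
\end{equation*}
and to bound the two contributions separately. I would handle the first term as a telescoping sum of inter-approach times, $\Ta_N - \Ta_1 = \sum_{j=2}^N (\Ta_j - \Ta_{j-1})$, and control each increment with Proposition~\ref{prop:int-app-times}. To invoke that result at each $j$ I must verify its hypotheses: condition~\eqref{eq:suff-cond} holds under the standing assumption of the paper; the prescription~\eqref{eq:tauj} together with the scheduling constraint $\tau_1 \geq \Te_1$ from~\eqref{eq:taum} guarantees $\tau_j \in [\taue_j, \taul_j]$, as already observed below~\eqref{eq:tauj}; and $\vv_{j-1}(\Ta_{j-1}) \geq \nunom$ follows by induction, since Proposition~\ref{prop:int-app-times} itself returns $\vv_j(\Ta_j) \geq \nunom$, with the base case $\vv_1(\Ta_1) \geq \nunom$ supplied by Theorem~\ref{thm:sf-control}(b).

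Granting the hypotheses, the key observation is that each step obeys $\Ta_j - \Ta_{j-1} \leq \Tiat$ regardless of which alternative of Proposition~\ref{prop:int-app-times} is active. In case (a) this is immediate. In case (b) we have $\Ta_j = \tau_j$ while $\Ta_{j-1} \geq \tau_{j-1}$ (no vehicle approaches before its prescribed time, as established in the proof of Proposition~\ref{prop:int-app-times}), so $\Ta_j - \Ta_{j-1} \leq \tau_j - \tau_{j-1} = \Ac \Tnom \leq \Tnom \leq \sigma_0 \Tnom \leq \Tiat$, using $\Ac \in [0,1]$, $\sigma_0 > 1$, and the definition~\eqref{eq:Tiat} of $\Tiat$. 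Summing the $N-1$ increments then gives $\Ta_N - \Ta_1 \leq (N-1)\Tiat$, which is the first term of~\eqref{eq:bartauocc}.

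For the second term I would bound the crossing time of vehicle $N$. It travels from $\xv_N = 0$ at time $\Ta_N$ to $\xv_N = \Delta + L$ at time $\Tx_N$, a distance $\Delta + L$, and by Theorem~\ref{thm:sf-control}(b) its speed satisfies $\vv_N(t) \geq \nunom$ for all $t \geq \Ta_N$. Hence $\Delta + L = \int_{\Ta_N}^{\Tx_N} \vv_N \,\mathrm{d}t \geq \nunom (\Tx_N - \Ta_N)$, so $\Tx_N - \Ta_N \leq (L+\Delta)/\nunom \leq \max\{(L+\Delta)/\nunom, \Tiat\}$. Adding this to the first bound yields exactly~\eqref{eq:bartauocc}.

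The main obstacle I anticipate is not any individual estimate but the bookkeeping that makes the induction airtight: one must confirm that the hypotheses of Proposition~\ref{prop:int-app-times} propagate from each vehicle to the next --- in particular the minimum-velocity condition, which is precisely what allows the per-step bound to be applied uniformly --- and one must not overlook case (b) of that proposition, where the prescribed time is met exactly, in order to guarantee that the per-step inter-approach increment is still at most $\Tiat$ and not something larger. The $\max$ in~\eqref{eq:bartauocc} is merely a conservative way of recording the crossing-time term, since $(L+\Delta)/\nunom \leq \max\{(L+\Delta)/\nunom, \Tiat\}$, so it demands no extra argument.
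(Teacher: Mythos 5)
Your proposal is correct and follows essentially the same route as the paper: both decompose $\tauocc$ as $(\Ta_N - \Ta_1) + (\Tx_N - \Ta_N)$, bound the first term by $(N-1)\Tiat$ via Proposition~\ref{prop:int-app-times} using $\tau_j - \Ta_{j-1} \leq \tau_j - \tau_{j-1} = \Ac\Tnom \leq \Tnom < \Tiat$ (your case-(b) computation is exactly this observation, which in the paper shows case~(a) always applies), and bound the second by the minimum crossing speed $\nunom$. The extra bookkeeping you flag (propagating $\vv_{j-1}(\Ta_{j-1}) \geq \nunom$ inductively from Theorem~\ref{thm:sf-control}(b)) is implicit in the paper's proof and your explicit treatment is consistent with it.
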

\begin{proof}
  From Theorem~\ref{thm:sf-control}, we know that $\Ta_1 = \tau_1$. We
  also know that $\Ta_j \geq \tau_j$ for each $j \in \ntil{N}$. Thus,
  as a result of~\eqref{eq:tauj}, we know that $\tau_j - \Ta_{j-1}
  \leq \Tnom < \Tiat$ for all $j \in \ntil{N}$. Hence, from
  Proposition~\ref{prop:int-app-times}, we see that the last vehicle
  $N$ approaches the target region at time $\Ta_{N}$ satisfying
  $\Ta_{N} \leq \Ta_{1} + (N - 1) \Tiat$. Since each vehicle travels
  with a velocity of at least $\nunom$ after approaching the
  intersection, the vehicle $N$ (and thus the group of vehicles) exits
  the intersection no later than $ \Ta_{N} + \frac{ L + \Delta }{
    \nunom }$. That is,
  \begin{equation*}
    \Tx_N \leq \Ta_{N} + \frac{ L + \Delta }{ \nunom } \leq \Ta_{1} +
    (N - 1) \Tiat + \frac{ L + \Delta }{ \nunom } ,
  \end{equation*}
  from which the result follows.
\end{proof}

The reasoning for the inclusion of $\Tiat$ in the second term
of~\eqref{eq:bartauocc} is as follows. There may be a second group of
vehicles that uses the intersection immediately after the first
group. Thus, we would like to have a safe-following distance between
the last vehicle of the first group and the first vehicle of the
second group even as it approaches the intersection at its assigned
time. The inclusion of the term $\Tiat$ ensures that if
$\tau_{N+1} \geq \tau_1 + \bartauocc$ then
$\tau_{N+1} - \Ta_{N} \geq \Tiat$, where $N+1$ is the index of the
first car in the second group of vehicles. Then, from
Proposition~\ref{prop:int-app-times}(b), it follows that
$\Ta_{N+1} = \tau_{N+1}$. This helps in scheduling the intersection
usage by several groups of vehicles with just the aggregate data of
$\tau_1$ and $\bartauocc$ for each group.

\section{Simulations}\label{sec:sim}

This section presents simulations of the vehicle string evolution
under the proposed \localvehcontrol
controller. Table~\ref{tab:sys-par} specifies the system parameters
employed in the simulations ($\Tnom$ and $\Tiat$ are computed
according to~\eqref{eq:Tnom} and~\eqref{eq:Tiat}, while the remaining
parameters are design choices or are typical of cars and arterial
roads ). All the units are given in SI units. For better intuition,
$v^M$ and $\nunom$ are equivalently $60$km/h and $48$km/h,
respectively.
\begin{table}[!htb]
  \caption{System parameters}\label{tab:sys-par}
  \centering
  \begin{tabular}{l l l}
    Parameter & Symbol & Value \\
    \hline
    Car length & $L$ & $4$m \\
    Target region length & $\Delta$ & $12$m \\
    Max. speed limit & $v^M$ & $16.667$m/s\\
    Max. accel. & $u_M$ & $3$m/s$^2$ \\
    Min. accel. & $u_m$ &  $-4$m/s$^2$ \\
    Nominal speed of crossing & $\nunom$ &  $13.333$m/s\\
    Parameter in~\eqref{eq:coupling-set} & $\sigma_0$ & 1.2 \\
    Nominal inter-vehicle approach time & $\Tnom$ & $\approx 1.24$s
    \\
    Upper bound on inter-vehicle approach time & $\Tiat$ &
                                                           $\approx1.59$s
  \end{tabular}
\end{table}
We present five sets of simulations, labeled Sim1 to Sim5. In all of
them, the number of vehicles is $N = 8$.  The initial conditions are
randomly generated so that initial safety, $\sigma_j(0) \geq 1$, is
satisfied for all $j \in \ntil{N}$ and~\eqref{eq:suff-cond} holds for
all $j \in \until{N}$. In all simulations but Sim5, the initial
conditions are the same, with the only distinguishing factor being how
the prescribed approach times $\tau_j$ are determined.

In Sim1, shown in Figure~\ref{fig:sim1}, the prescribed approach times
are randomly generated, with the only constraints being
$\tau_j \geq \taue_j$.
\begin{figure}[!htpb]
  \centering
  \subfigure[\label{fig:T1}]
  {\includegraphics[width=0.23\textwidth]{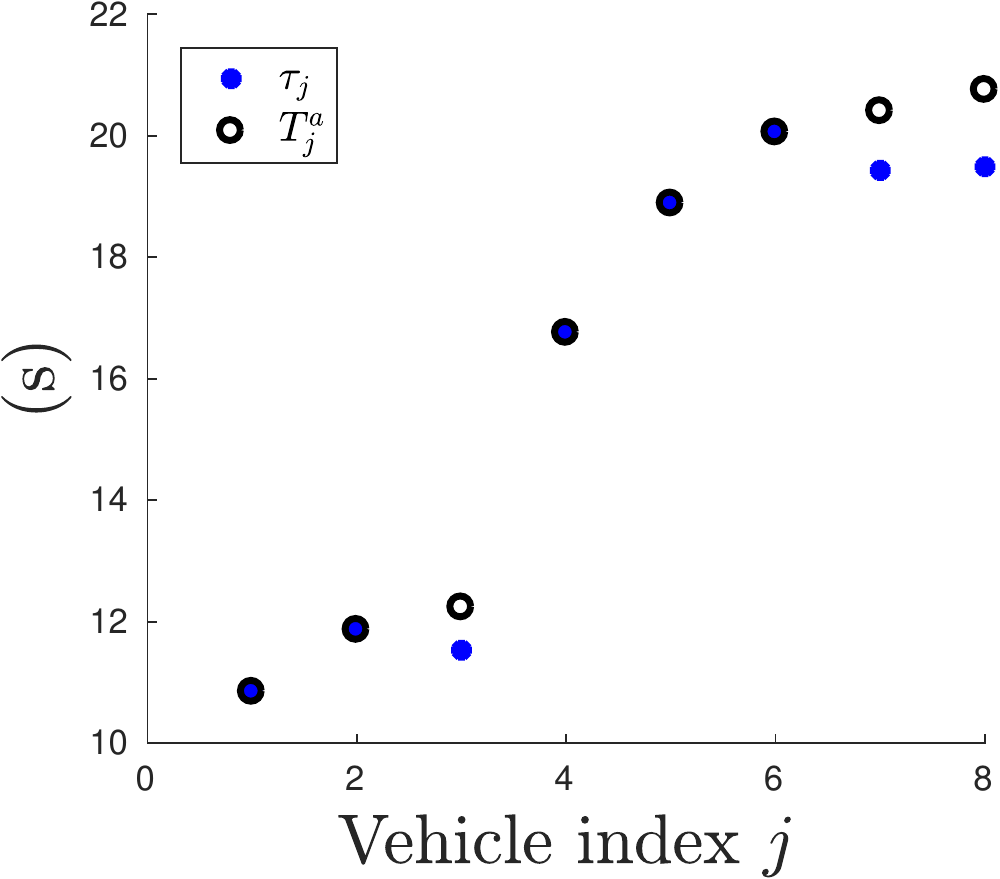}}
  \subfigure[\label{fig:X1}]
  {\includegraphics[width=0.23\textwidth]{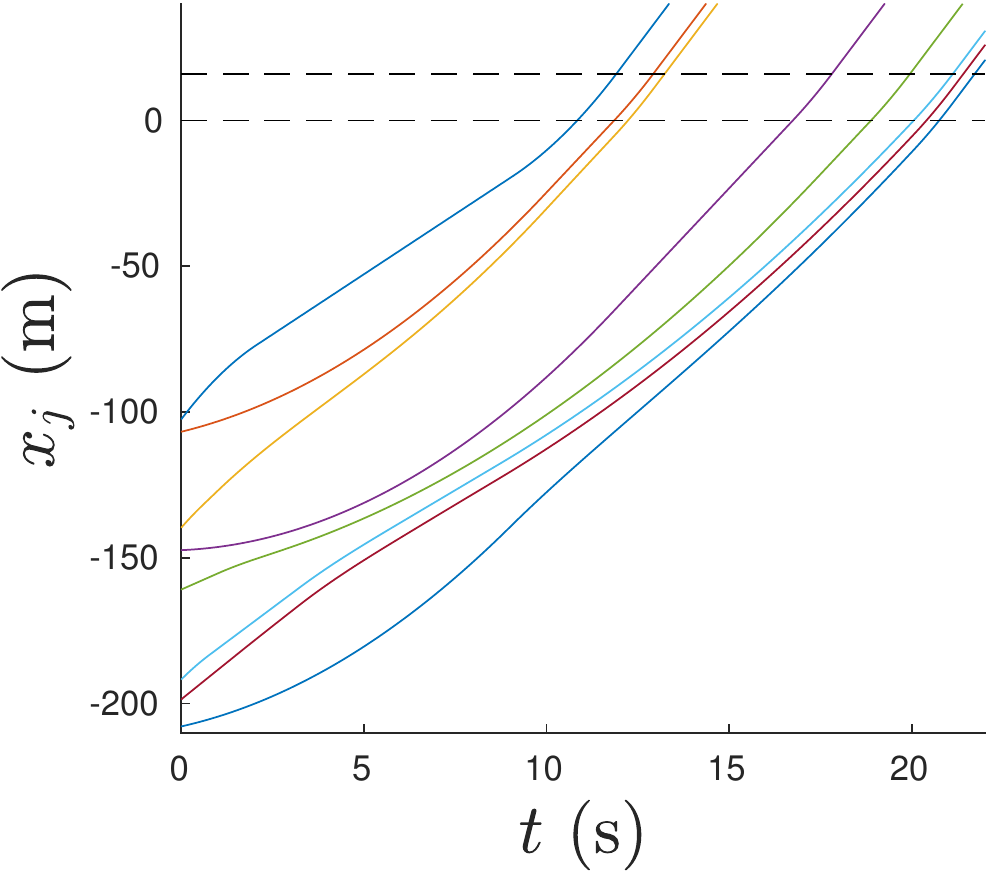}}
  \\
  \subfigure[\label{fig:S1}]
  {\includegraphics[width=0.23\textwidth]{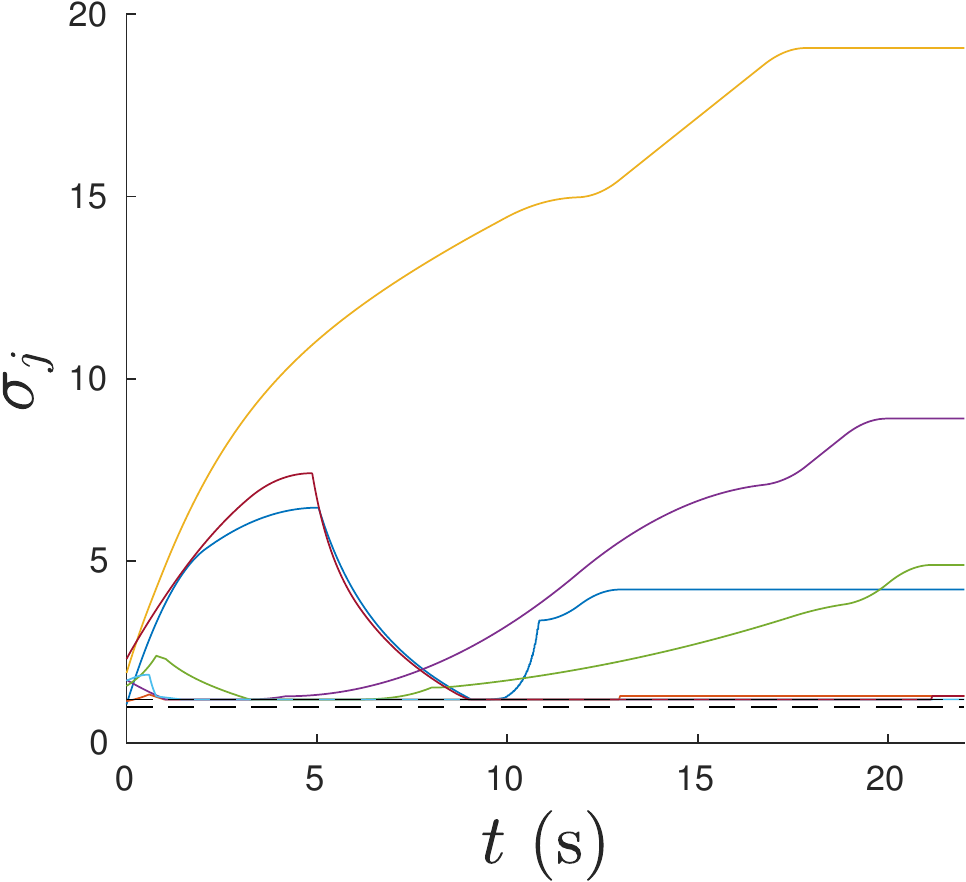}}
  \subfigure[\label{fig:V1}]
  {\includegraphics[width=0.23\textwidth]{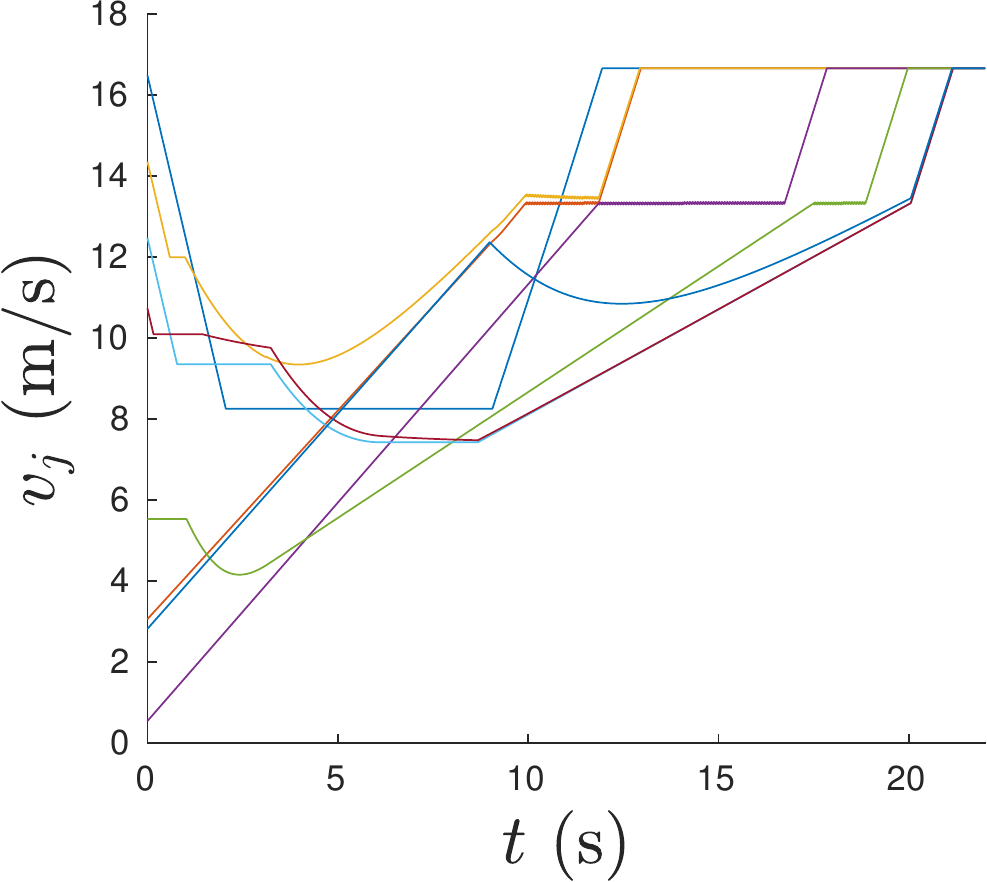}}
  \vspace*{-1ex}
  \caption{Results for Sim1. (a) Prescribed and actual approach times
    of the vehicles in the string. (b) Evolution of the position of
    the vehicles. The region between the dotted lines is the target
    region. (c) Evolution of the safety ratios. The dotted lines are
    at $\sigma = 1$ and $\sigma_0 = 1.2$. (d) Evolution of the
    velocities.}\label{fig:sim1}
\end{figure}
The choice of random $\tau_j$ does not in general result in cohesion
of the vehicles as they pass through the target region, as can be
clearly seen in Figures~\ref{fig:X1}-\ref{fig:S1}. The occupancy time
in this case is $\tauocc = 10.88$s.  In this case, we do not have
either an analytical expression for the bound on the occupancy time
(which is why the prescribed approach times must instead be
constrained, for example as in~\eqref{eq:tauj}, in the context of
intersection management).

In Sim2 to Sim4, $\tau_1 = \Te_1$ is chosen with $\Te_1$ as
in~\eqref{eq:taum} and the remaining $\tau_j$ are determined according
to~\eqref{eq:tauj}. In Sim2, shown in Figure~\ref{fig:sim2}, we choose
$\Ac = 1$.
\begin{figure}[!htpb]
  \centering
  \subfigure[\label{fig:T2}]
  {\includegraphics[width=0.23\textwidth]{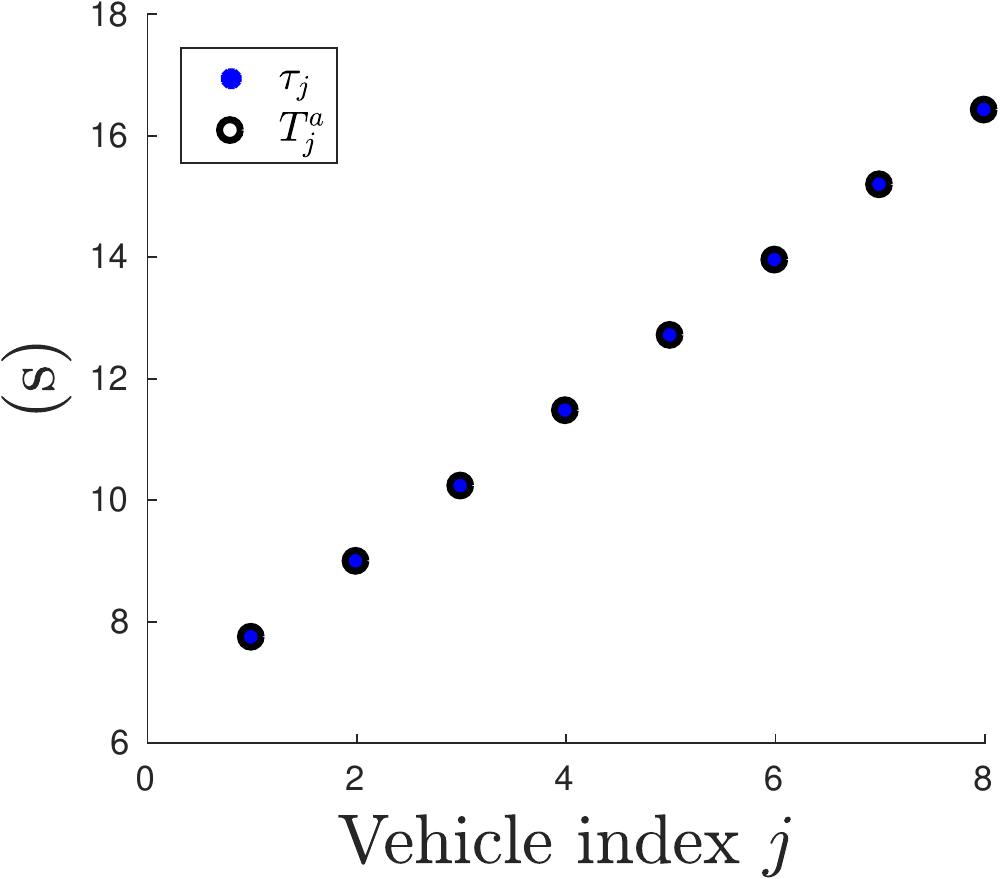}}
  \subfigure[\label{fig:X2}]
  {\includegraphics[width=0.23\textwidth]{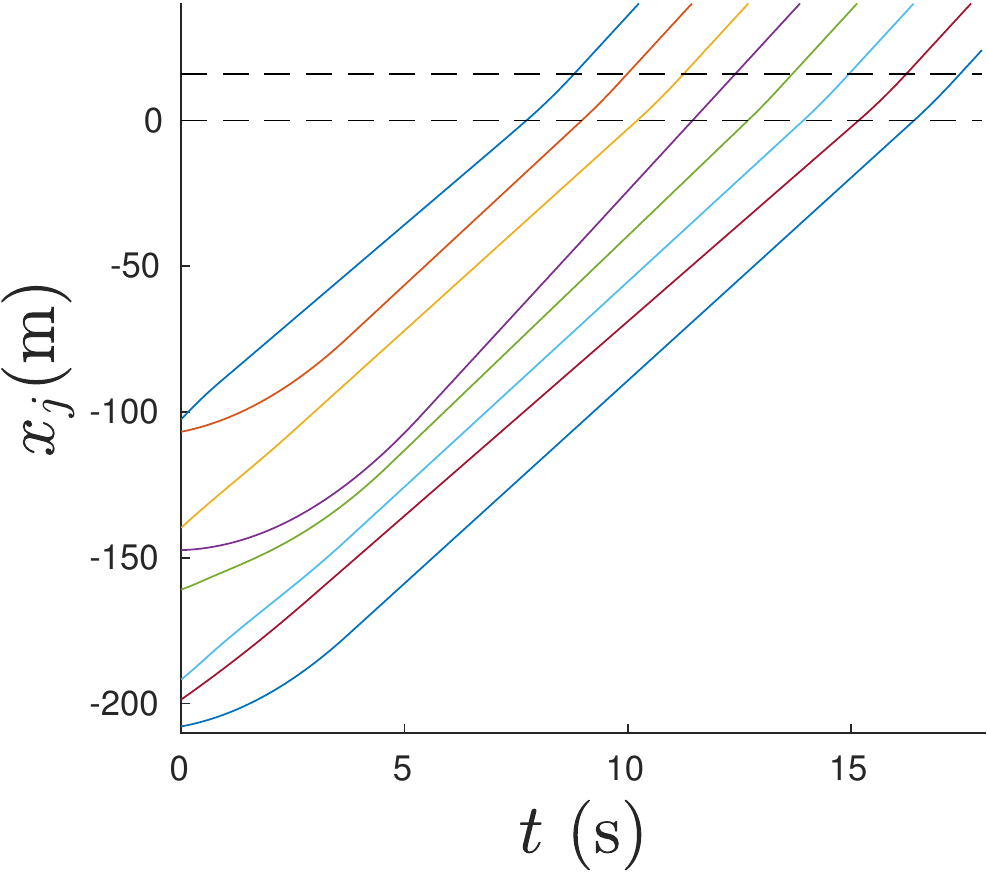}}
  \\
  \subfigure[\label{fig:S2}]
  {\includegraphics[width=0.23\textwidth]{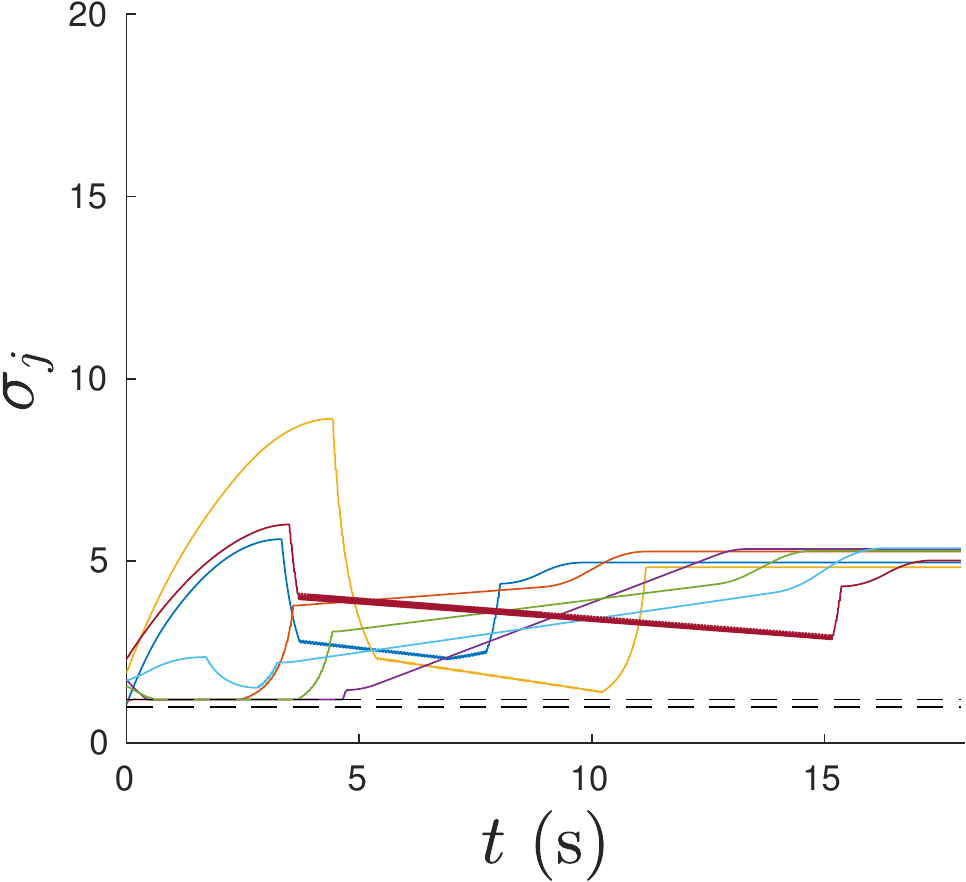}}
  \subfigure[\label{fig:V2}]
  {\includegraphics[width=0.23\textwidth]{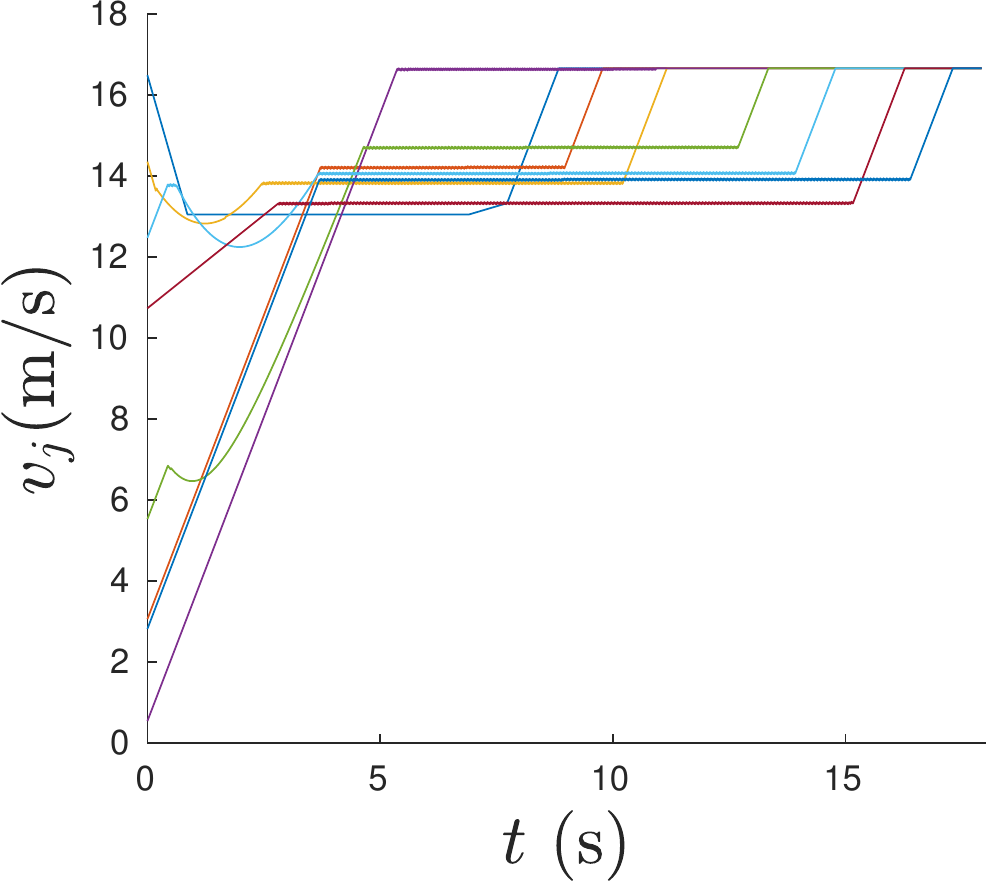}}
  \vspace*{-1ex}
  \caption{Results for Sim2. (a) Prescribed and actual approach times
    of the vehicles in the string. (b) Evolution of the position of
    the vehicles. The region between the dotted lines is the target
    region. (c) Evolution of the safety ratios. The dotted lines are
    at $\sigma = 1$ and $\sigma_0 = 1.2$. (d) Evolution of the
    velocities.}\label{fig:sim2}
\end{figure}
Figure~\ref{fig:T2} shows that the prescribed and the actual approach
times coincide for each vehicle. In fact, for $\sigma_0$ smaller than
around $4$, we have consistently observed that $\Ta_j = \tau_j$ for
each vehicle $j$. Figures~\ref{fig:X2}-\ref{fig:S2} demonstrate the
moderate cohesion that is achieved as vehicles cross the target
region. The occupancy time and the theoretical upper bound are
$\tauocc = 9.72$s and $\bartauocc = 12.72$s, respectively. While each
vehicle approaching the target region at its prescribed time is
desirable, the occupancy time $\tauocc$ is large.

In Sim3, shown in Figure~\ref{fig:sim3}, we demonstrate the utility of
the tuning parameter $\Ac$ in~\eqref{eq:taum}-\eqref{eq:tauj}. We choose
$\Ac = 0$, resulting in the prescribed approach times being all the
same, $\tau_j = \tau_1$ for all $j$. This necessarily forces vehicles
to interact through the coupling set and the safe-following controller
aggressively.  While the actual approach times are no longer equal to
their prescribed values (cf.  Figure~\ref{fig:T3}), this specification
results in high cohesion of vehicles as they cross the target region
(cf. Figures~\ref{fig:X3}-\ref{fig:S3}).
\begin{figure}[!htpb]
  \centering
  \subfigure[\label{fig:T3}]
  {\includegraphics[width=0.23\textwidth]{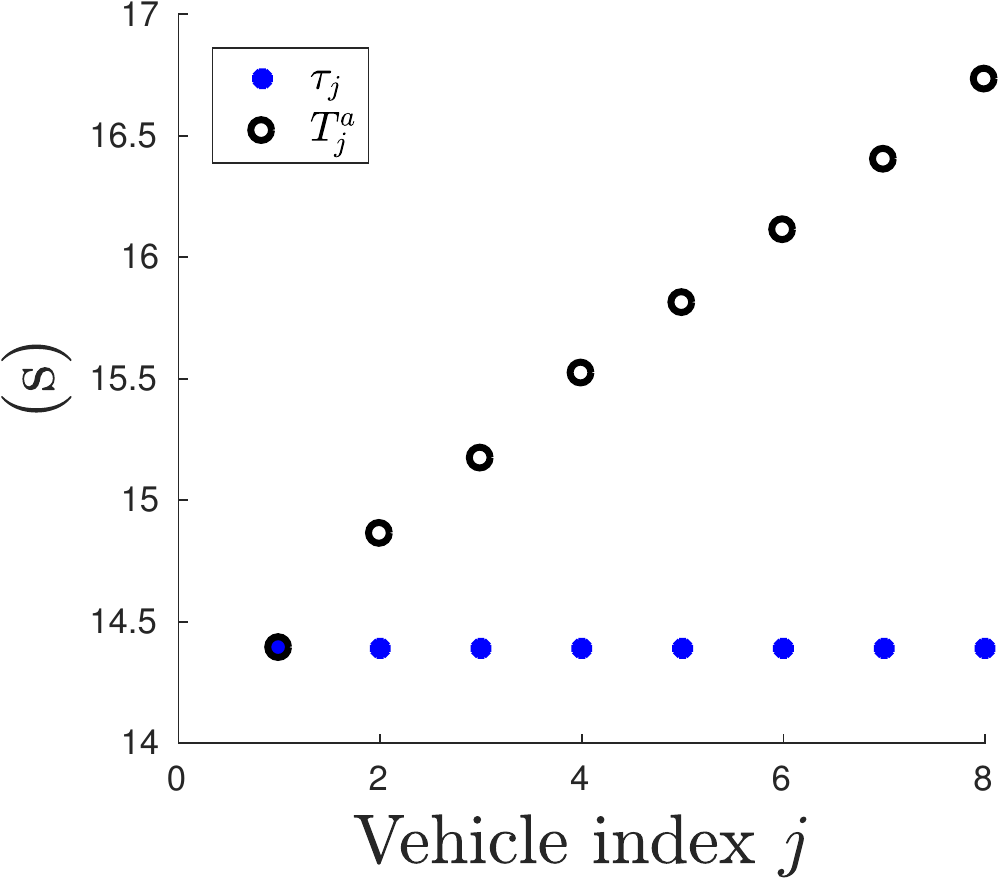}}
  \subfigure[\label{fig:X3}]
  {\includegraphics[width=0.23\textwidth]{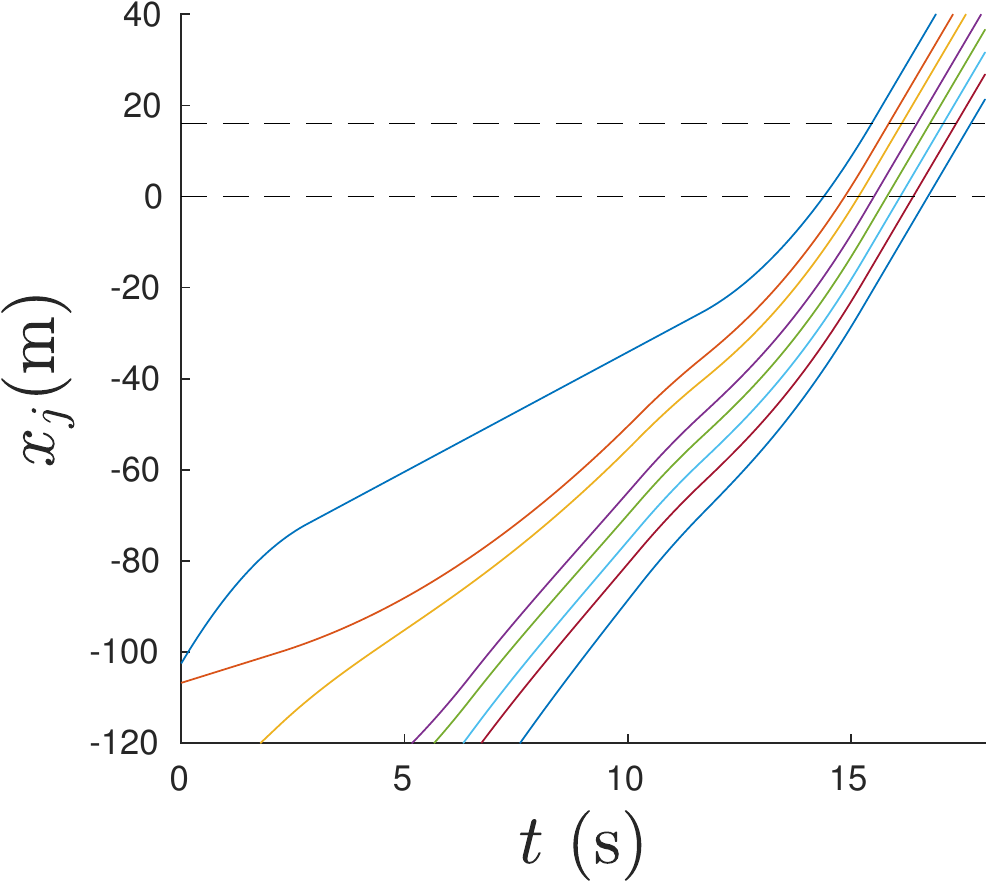}}
  \\
  \subfigure[\label{fig:S3}]
  {\includegraphics[width=0.23\textwidth]{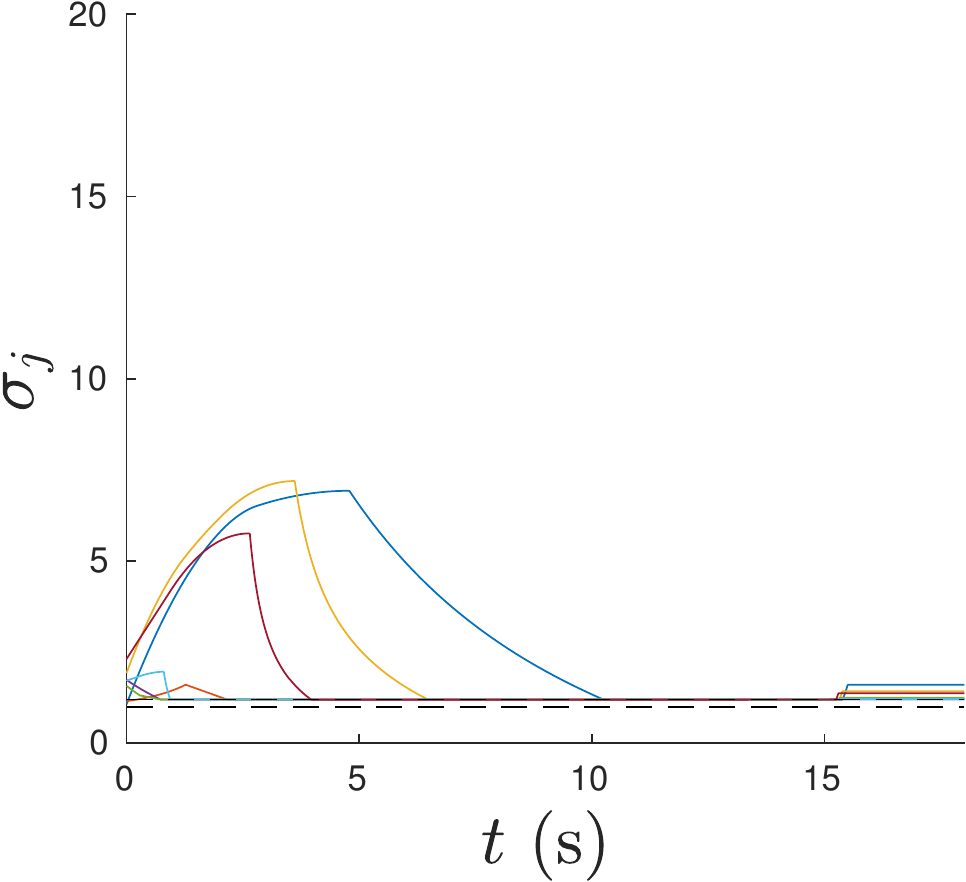}}
  \subfigure[\label{fig:V3}]
  {\includegraphics[width=0.23\textwidth]{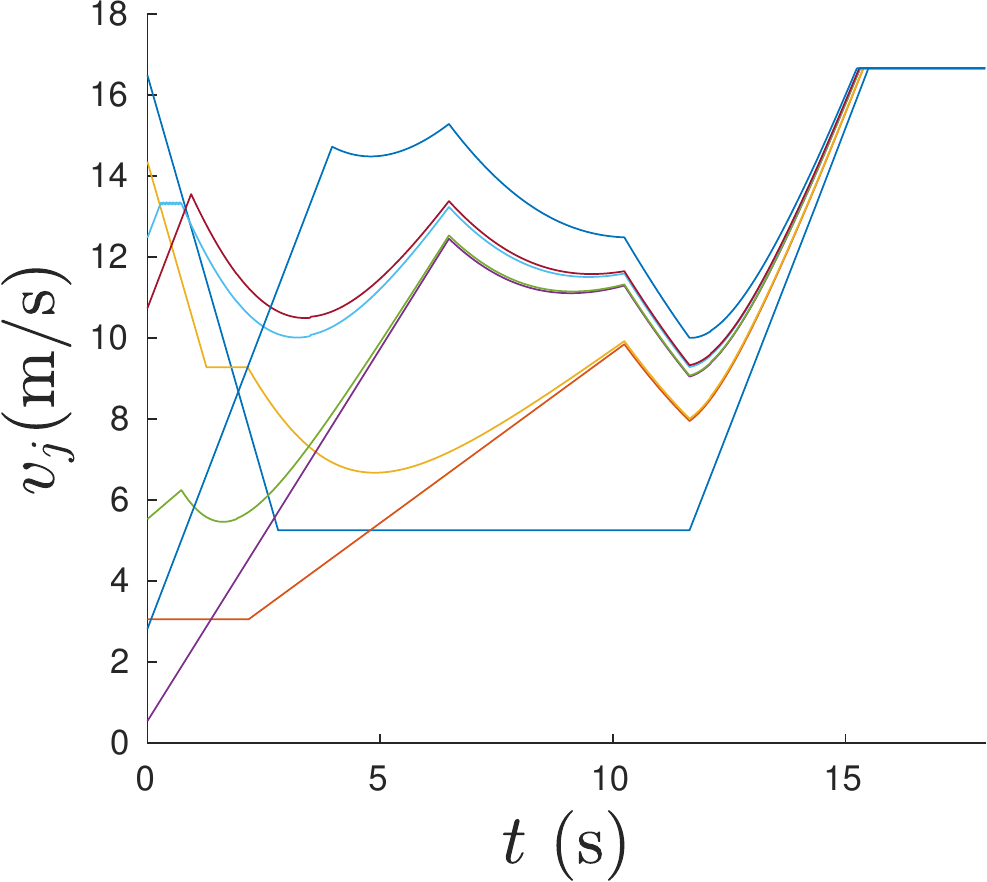}}
  \vspace*{-1ex}
  \caption{Results for Sim3. (a) Prescribed and actual approach
    times of the vehicles in the string. (b) Evolution of the position
    of the vehicles. The region between the dotted lines is the target
    region. (c) Evolution of the safety ratios. The dotted lines are
    at $\sigma = 1$ and $\sigma_0 = 1.2$. (d) Evolution of the
    velocities.}\label{fig:sim3}
\end{figure}
An important feature we have observed consistently in simulations with
$\Ac$ smaller than $1$ is the synchronization of the velocities
(cf. Figure~\ref{fig:V3}). By contrast, the velocity synchronization
in Figures~\ref{fig:V1} and~\ref{fig:V2} is an artifact of the
velocity saturation. We do not have analytical proof of this
phenomenon however. In Sim3, the occupancy time is much smaller at
$\tauocc = 3.3$s, although our bound on it still remains at
$\bartauocc = 12.72$s. Note also that the price of a smaller $\Ac$ is
a larger $T_1^e$, the earliest time that vehicle $1$ can approach the
intersection. This suggests an interesting trade-off between
intersection occupancy time and earliest time of
arrival. Figure~\ref{fig:sim3-UM} shows the control profile and the
evolution of the control mode of vehicle $8$ in Sim3. As expected, the
evolution of the control trajectory takes a complex form.
\begin{figure}[!htpb]
  \centering
  \subfigure[\label{fig:U3}]
  {\includegraphics[width=0.23\textwidth]{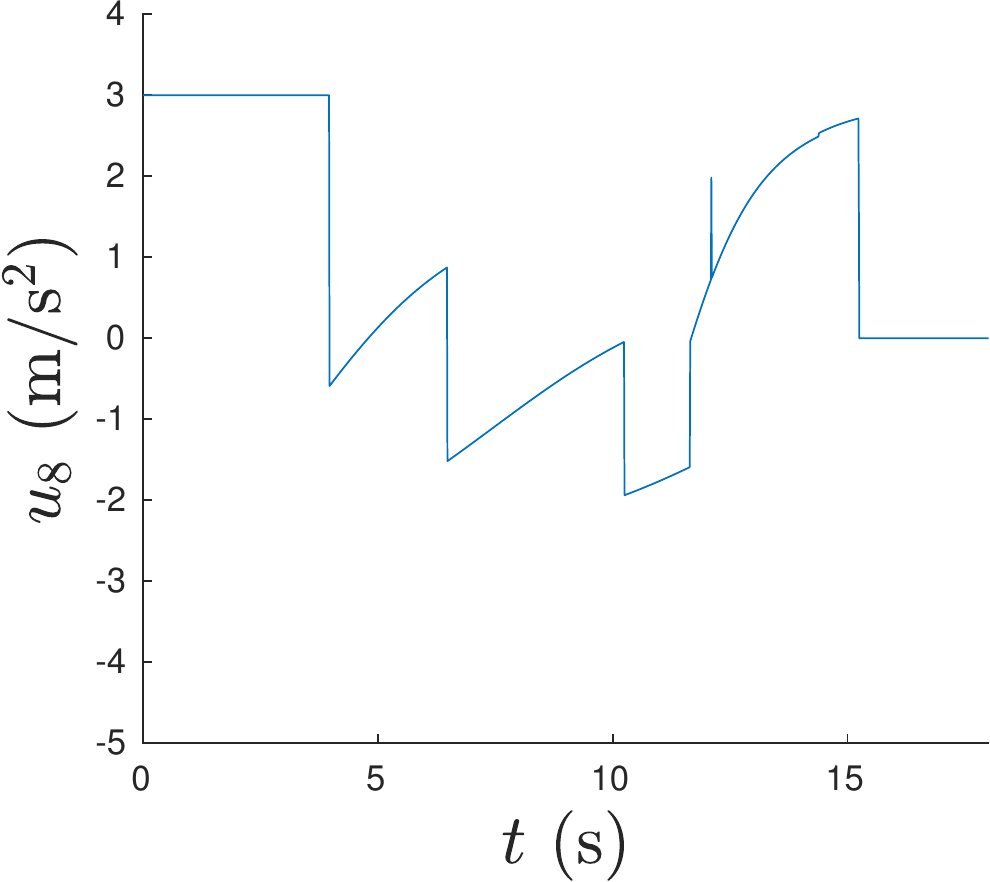}}
  \subfigure[\label{fig:M3}]
  {\includegraphics[width=0.23\textwidth]{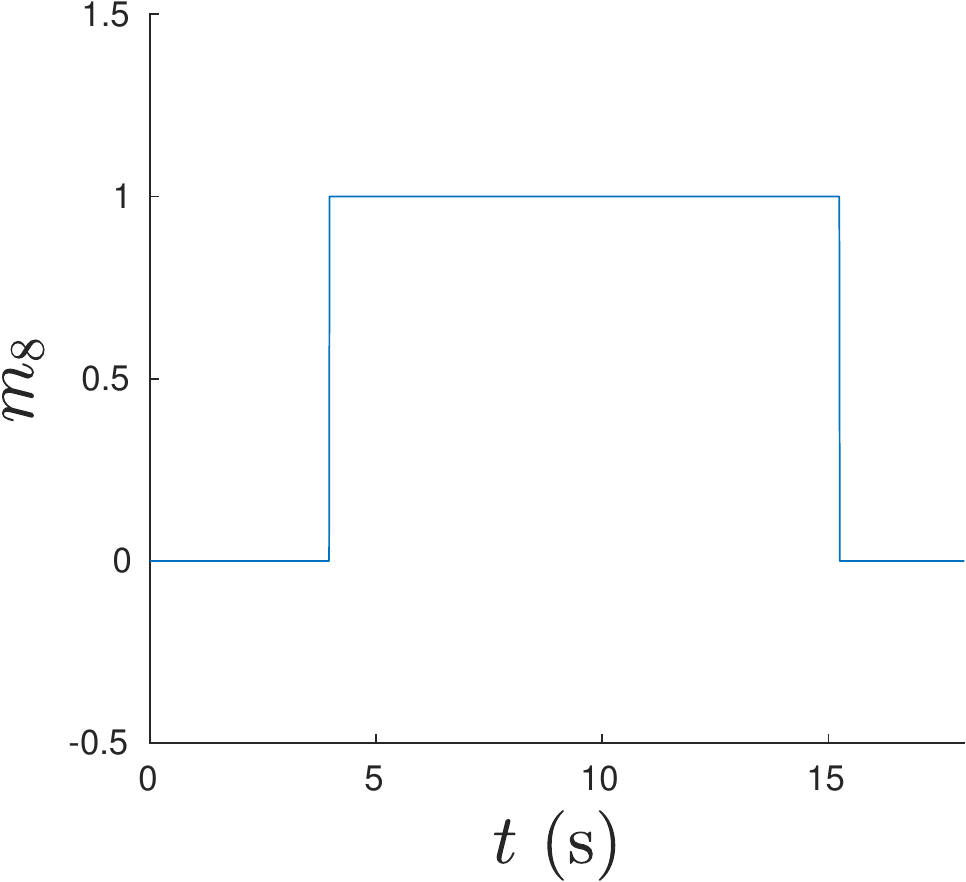}}
  \vspace*{-1ex}
  \caption{Results for Sim3. (a) Control profile for vehicle $8$. (b)
    Evolution of the control mode of vehicle $8$. $m_8 = 0$ and
    $m_8 = 1$ indicates vehicle $8$ is in the uncoupled mode and
    safe-following mode respectively.}\label{fig:sim3-UM}
\end{figure}

Finally, we illustrate in Figure~\ref{fig:Rp} the dependence of the
prescribed approach time of the first vehicle, the occupancy time, and
the time and fuel costs on the tuning parameter~$\Ac$.
\begin{figure}[!htpb]
  \centering
  \subfigure[\label{fig:Rp1}]
  {\includegraphics[width=0.23\textwidth]{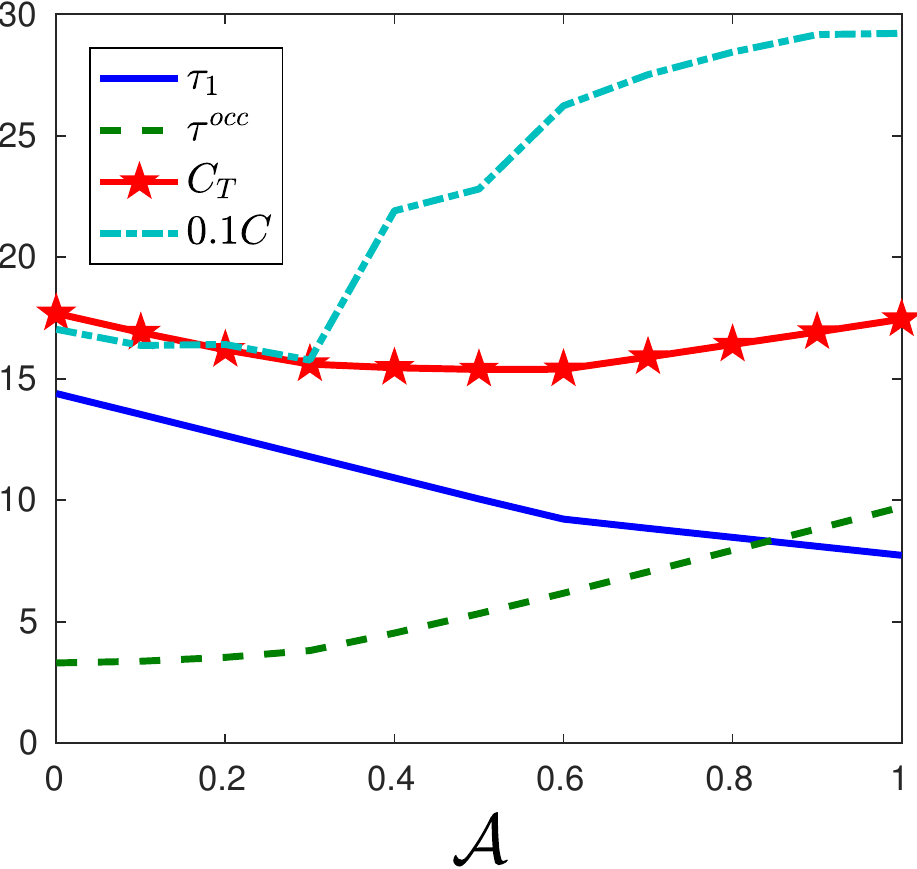}}
  \subfigure[\label{fig:Rp2}]
  {\includegraphics[width=0.23\textwidth]{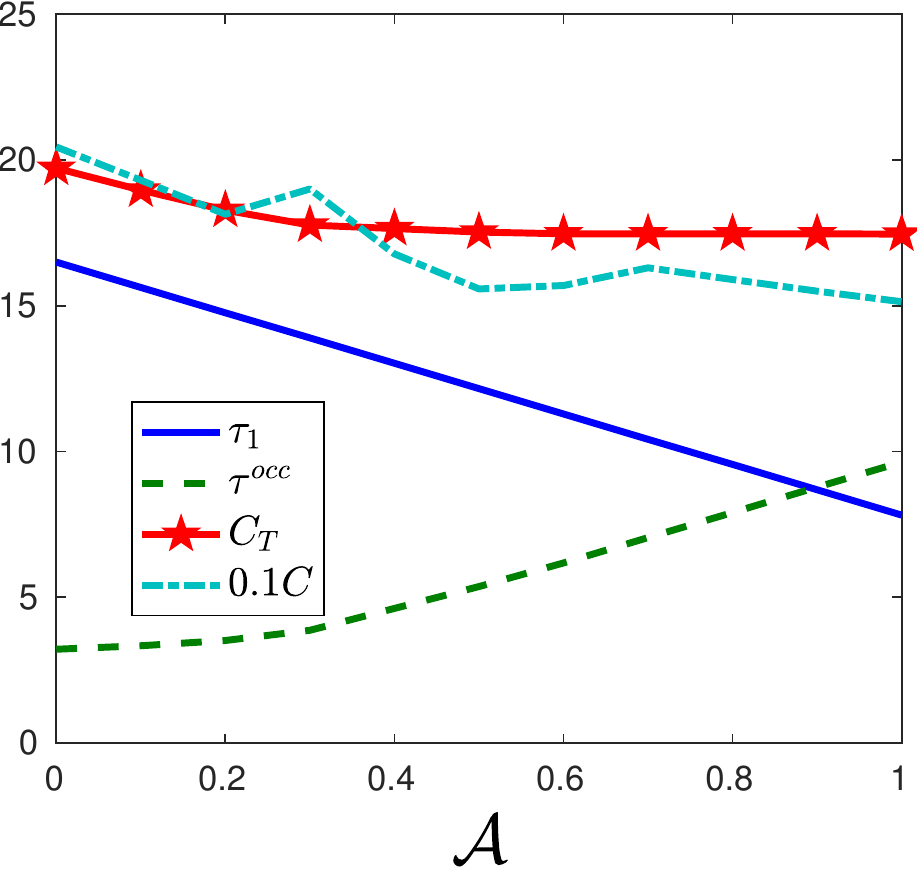}}
  \vspace*{-1ex}
  \caption{Results of Sim4 and Sim5. Each plot shows the dependence on
    the tuning parameter~$\Ac$ of the prescribed approach time of vehicle
    $1$, $\tau_1$, the occupancy time $\tauocc$, the time cost
    $C_T = \tau_1 + \tauocc$, and the fuel cost $C$.}\label{fig:Rp}
\end{figure}
In Sim4, the initial conditions are the same as in Sims 1-3, while in
Sim5 the initial conditions are different. The general trends for
$\tau_1$, $\tauocc$ and the time cost $C_T = \tau_1 + \tauocc$ are
independent of the initial conditions. However, we found the trend of
the fuel cost $C$ to be dependent on the initial conditions, as
illustrated in Figure~\ref{fig:Rp}.  Qualitatively, the earliest
approach time for the group of vehicles $T_1^e$, which is also set as
$\tau_1$ in the simulations, is decreasing with increasing $\Ac$,
while the occupancy time $\tauocc$ is increasing. An explanation for
the high dependence of the fuel cost $C$ on the initial conditions is
that these are the main factor determining the fraction of time that
vehicles spend in the coupled or uncoupled mode. Thus, the value of
the parameter $\Ac$ giving the best performance in terms of $C$
depends on the vehicles' initial conditions.

\section{Conclusions}\label{sec:conc}
We have studied the problem of optimally controlling a vehicular
string with safety requirements and finite-time specifications on the
approach time to a target region.  The main motivation for this
problem is intelligent management at traffic intersections with
networked vehicles.  We have proposed a distributed control
algorithmic solution which is provably safe (ensuring that even if
there was a communication failure, the vehicles could come to a
complete stop without collisions) and guarantees that the vehicles
satisfy the finite-time specifications under speed limits and
acceleration saturation.  We have also discussed how the proposed
distributed algorithm can be integrated into a larger framework for
intersection management for computer controlled and networked
vehicles. Finally, we have illustrated our results in simulation.
Future work will explore the derivation of tighter bounds on the
occupancy time of the intersection, optimizing the trade-off between
arrival time of the vehicle string at the intersection and occupancy
time, obtaining bounds on the overall fuel cost of the string,
refining the design of the safe-following controller, characterizing
how the design parameters affect the optimality of our design,
exploring robustness guarantees against vehicle addition and removal,
and incorporating privacy requirements.

\section*{Acknowledgments}
The research was supported by NSF Award CNS-1446891 and AFOSR Award
FA9550-10-1-0499.

\bibliographystyle{ieeetr} %
\bibliography{alias,FB,JC,Main,Main-add}

\begin{IEEEbiography}[{\includegraphics[width=1in,
    height=1.25in,clip,keepaspectratio]
    {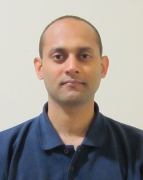}}]{Pavankumar Tallapragada}
  received the B.E. degree in instrumentation engineering from Shri
  Guru Gobind Singhji Institute of Engineering and Technology, Nanded,
  India, in 2005, the M.Sc. (Engg.)  degree in instrumentation from
  the Indian Institute of Science, Bangalore, India, in 2007, and the
  Ph.D. degree in mechanical engineering from the University of
  Maryland, College Park, MD, USA, in 2013. From 2014 to 2017, he was
  a Postdoctoral Scholar in the Department of Mechanical and Aerospace
  Engineering, University of California, San Diego, CA, USA. He is
  currently an Assistant Professor in the Department of Electrical
  Engineering, Indian Institute of Science. His research interests
  include event-triggered control, networked control systems,
  distributed control and networked transportation, and traffic
  systems.
\end{IEEEbiography}

\begin{IEEEbiography}[{\includegraphics[width=1in,
  height=1.25in,clip,keepaspectratio]{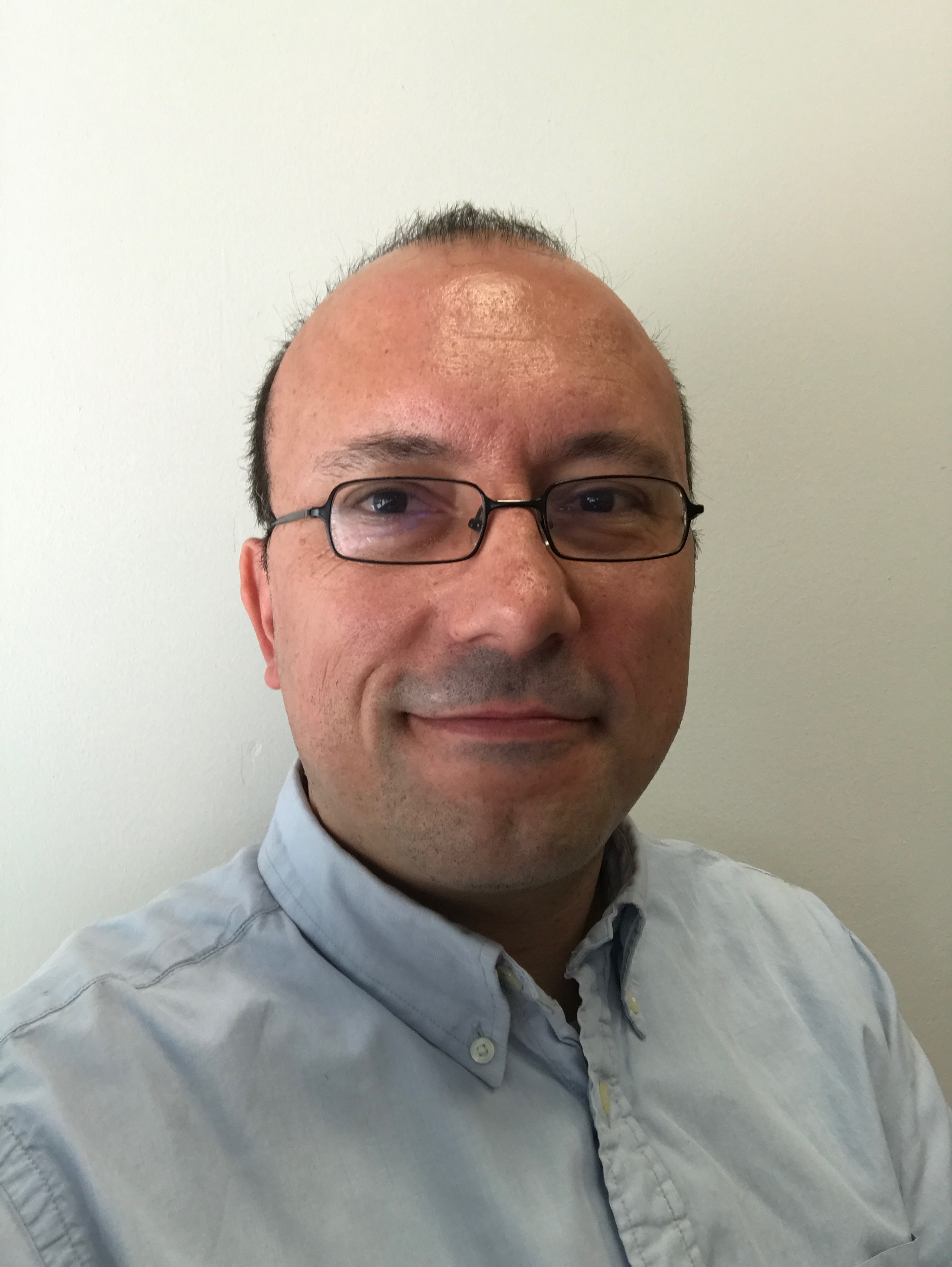}}]{Jorge
    Cort\'es}
  received the Licenciatura degree in mathematics from Universidad de
  Zaragoza, Zaragoza, Spain, in 1997, and the Ph.D. degree in
  engineering mathematics from Universidad Carlos III de Madrid,
  Madrid, Spain, in 2001.  He held post-doctoral positions with the
  University of Twente, Twente, The Netherlands, and the University of
  Illinois at Urbana-Champaign, Urbana, IL, USA. He was an Assistant
  Professor with the Department of Applied Mathematics and Statistics,
  University of California, Santa Cruz, CA, USA, from 2004 to 2007. He
  is currently a Professor in the Department of Mechanical and
  Aerospace Engineering, University of California, San Diego, CA,
  USA. He is the author of Geometric, Control and Numerical Aspects of
  Nonholonomic Systems (Springer-Verlag, 2002) and co-author (together
  with F. Bullo and S. Mart\'inez) of Distributed Control of Robotic
  Networks (Princeton University Press, 2009).  He has been an IEEE
  Control Systems Society Distinguished Lecturer (2010-2014) and is an
  IEEE Fellow.  His current research interests include distributed
  control, complex networks, opportunistic state-triggered control and
  coordination, distributed decision making in power networks,
  robotics, and transportation, and distributed optimization,
  learning, and games.
\end{IEEEbiography}

\end{document}